\colorlet{genial}{black} 
\colorlet{genialsol}{black}
\newtheoremstyle{genialnumbox}
{7pt}
{7pt}
{\normalfont}
{}
{\small\bf\sffamily\color{genial}}
{\;}
{0.25em}
{%
{\small\sffamily\color{genial}\thmname{#1}}%
{\nobreakspace\thmnumber{\@ifnotempty{#1}{}\@upn{#2}}}
\thmnote{{\nobreakspace\the\thm@notefont\sffamily\bfseries\color{black}\nobreakspace(#3)}} 
}
\newtheoremstyle{blacknumex}
{7pt}
{7pt}
{\normalfont}
{} 
{\small\bf\sffamily}
{\;}
{0.25em}
{%
{\small\sffamily\color{genial}\thmname{#1}}%
{\nobreakspace\thmnumber{\@ifnotempty{#1}{}\@upn{#2}}}
\thmnote{{\nobreakspace\the\thm@notefont\sffamily\bfseries\color{black}\nobreakspace(#3)}} 
}
\newtheoremstyle{blacknumbox} 
{7pt}
{7pt}
{\normalfont}
{}
{\small\bf\sffamily}
{\;}
{0.25em}
{%
{\small\sffamily\color{genial}\thmname{#1}}%
{\nobreakspace\thmnumber{\@ifnotempty{#1}{}\@upn{#2}}}
\thmnote{{\nobreakspace\the\thm@notefont\sffamily\bfseries\color{black}\nobreakspace(#3)}} 
}
\newtheoremstyle{genialnum}
{7pt}
{7pt}
{\normalfont}
{}
{\small\bf\sffamily\color{genial}}
{\;}
{0.25em}
{%
{\small\sffamily\color{genial}\thmname{#1}}%
{\nobreakspace\thmnumber{\@ifnotempty{#1}{}\@upn{#2}}}
\thmnote{{\nobreakspace\the\thm@notefont\sffamily\bfseries\color{black}\nobreakspace(#3)}} 
}
\newmdenv[skipabove=7pt,
skipbelow=7pt,
rightline=false,
leftline=false,
topline=false,
bottomline=false,
backgroundcolor=black!5,
linecolor=genial,
innerleftmargin=5pt,
innerrightmargin=5pt,
innertopmargin=10pt,
leftmargin=0cm,
rightmargin=0cm,
innerbottommargin=10pt]{tBox}
\newmdenv[skipabove=7pt,
skipbelow=7pt,
rightline=false,
leftline=false,
topline=false,
bottomline=false,
backgroundcolor=genial!10,
linecolor=genial,
innerleftmargin=5pt,
innerrightmargin=5pt,
innertopmargin=5pt,
innerbottommargin=5pt,
leftmargin=0cm,
rightmargin=0cm,
linewidth=4pt]{eBox}	
\newmdenv[skipabove=7pt,
skipbelow=7pt,
rightline=false,
leftline=true,
topline=false,
bottomline=false,
linecolor=genial!50,
innerleftmargin=5pt,
innerrightmargin=5pt,
innertopmargin=5pt,
leftmargin=0cm,
rightmargin=0cm,
linewidth=4pt,
innerbottommargin=5pt]{dBox}	
\newmdenv[skipabove=7pt,
skipbelow=7pt,
rightline=false,
leftline=false,
topline=false,
bottomline=false,
linecolor=gray,
backgroundcolor=black!5,
innerleftmargin=5pt,
innerrightmargin=5pt,
innertopmargin=5pt,
leftmargin=0cm,
rightmargin=0cm,
linewidth=4pt,
innerbottommargin=5pt]{cBox}
\newmdenv[skipabove=7pt,
skipbelow=7pt,
rightline=false,
leftline=false,
topline=false,
bottomline=false,
linecolor=gray,
backgroundcolor=black!5,
innerleftmargin=5pt,
innerrightmargin=5pt,
innertopmargin=5pt,
leftmargin=0cm,
rightmargin=0cm,
linewidth=4pt,
innerbottommargin=5pt]{pBox}
\newmdenv[skipabove=7pt,
skipbelow=7pt,
rightline=false,
leftline=false,
topline=false,
bottomline=false,
linecolor=genialsol,
innerleftmargin=5pt,
innerrightmargin=5pt,
innertopmargin=0pt,
leftmargin=0cm,
rightmargin=0cm,
linewidth=4pt,
innerbottommargin=0pt]{solBox}	
\theoremstyle{genialnumbox}
\newtheorem{thm1}{Theorem}[section]
\newtheorem{ithm1}[thm1]{$\star$ THEOREM}
\newtheorem{ques1}[thm1]{Question}
\newtheorem{conj1}[thm1]{Conjecture}
\theoremstyle{blacknumex}
\newtheorem{exer}[thm1]{Exercise}
\newtheorem{exer*}[thm1]{$\ast$ Exercise}
\theoremstyle{blacknumbox}
\newtheorem{dfn1}[thm1]{Definition}
\theoremstyle{genialnum}
\newtheorem{cor1}[thm1]{Corollary}
\newtheorem{prop1}[thm1]{Proposition}
\newtheorem{lem1}[thm1]{Lemma}
\newtheorem{exm1}[thm1]{Example}
\newenvironment{theorem}{\paragraph{ } \begin{tBox}\begin{thm1}}{\end{thm1}\end{tBox}}
\newenvironment{exe*}{\paragraph{ } \begin{eBox}\begin{exer*}}{\hfill{\color{genial}
\ensuremath{\diamond\diamond\diamond}}\end{exer*}\end{eBox}}
\newenvironment{example}{\paragraph{ } \begin{exm1}}{\hfill{\tiny%
\ensuremath{\bigtriangleup\bigtriangledown\bigtriangleup}}\end{exm1}}
\newenvironment{ques}{\paragraph{ } \begin{cBox}\begin{ques1}}{\end{ques1}\end{cBox}}	
\newenvironment{conj}{\paragraph{ } \begin{cBox}\begin{conj1}}{\end{conj1}\end{cBox}}	
\newenvironment{proposition}{\paragraph{ } \begin{pBox}\begin{prop1}}{\end{prop1}\end{pBox}}	
\newenvironment{lemma}{\paragraph{ } \begin{pBox}\begin{lem1}}{\end{lem1}\end{pBox}}
\newenvironment{lem*}[1]{\vspace{1ex}\noindent
{\bf Lemma* (#1).} [restatement]  \hspace{0.5em} \em }{ }
\newenvironment{thm*}[1]{\begin{cBox}
\vspace{1ex}\noindent 
{\bf Theorem* (#1).} [restatement]  \hspace{0.5em} }{\end{cBox}}
\theoremstyle{genialnum}
\newtheorem*{clm*}{Claim}
\newenvironment{sol}%
{\begin{solBox}
\par \noindent 
\scriptsize
{\bf Solution to ex:{\color{blue} \arabic{exer}}.}  {\color{red} \ \  :( } \\ }%
{\hfill {\color{blue} :) $\checkmark$} \end{solBox}}
\newcommand{\ENDEXER}{
{\expandafter\comment}
{\expandafter\endcomment}
}
\renewcommand{\@seccntformat}[1]{\llap{\textcolor{genial}{\csname the#1\endcsname}\hspace{1em}}}                    
\renewcommand{\section}{\@startsection{section}{1}{\z@}
{-4ex \@plus -1ex \@minus -.4ex}
{1ex \@plus.2ex }
{\normalfont\large\sffamily\bfseries}}
\renewcommand{\subsection}{\@startsection {subsection}{2}{\z@}
{-3ex \@plus -0.1ex \@minus -.4ex}
{0.5ex \@plus.2ex }
{\normalfont\sffamily\bfseries}}
\renewcommand{\subsubsection}{\@startsection {subsubsection}{3}{\z@}
{-2ex \@plus -0.1ex \@minus -.2ex}
{.2ex \@plus.2ex }
{\normalfont\small\sffamily\bfseries}}                        
\renewcommand\paragraph{\@startsection{paragraph}{4}{\z@}
{-2ex \@plus-.2ex \@minus .2ex}
{.1ex}
{\normalfont\small\sffamily\bfseries}}
\renewcommand\appendix{\par
     \newcounter{appsection}
     \setcounter{appsection}{\value{section}}
     \newcounter{valsection}
     \setcounter{valsection}{\value{section}}
     \addtocounter{valsection}{1-\value{appsection}}
  \setcounter{subsection}{0}%
  \setcounter{equation}{0}
  \gdef\thefigure{\@Alph\c@section.\arabic{figure}}%
  \gdef\thetable{\@Alph\c@section.\arabic{table}}%
  \gdef\thesection{
  \@Alph\c@valsection}%
  \@addtoreset{equation}{section}%
  \gdef\theequation{\@Alph\c@section.\arabic{equation}}%
}
\newcommand{\IP}[1]{\left\langle #1 \right\rangle}
\newcommand{\Integer}{\mathbb{Z}}
\newcommand{\Z}{\Integer}
\newcommand{\N}{\mathbb{N}}
\newcommand{\R}{\mathbb{R}}
\newcommand{\eps}{\varepsilon}
\newcommand{\ie}{{\em i.e.\ }}
\newcommand{\eg}{{\em e.g.\ }}
\def\squareforqed{\hbox{\rlap{$\sqcap$}$\sqcup$}}
\def\qed{\ifmmode\squareforqed\else{\unskip\nobreak\hfil
\penalty50\hskip1em\null\nobreak\hfil\squareforqed
\parfillskip=0pt\finalhyphendemerits=0\endgraf}\fi}
\newcommand{\ignore}[1]{ }
\newcommand{\p}{\partial}
\newcommand{\vphi}{\varphi}
\newcommand{\define}[1]{\textbf{#1}}
\newcommand{\stab}{\mathsf{stab}}
\renewcommand{\ker}{\mathsf{Ker}}
\title{Groups with finitely many Busemann points}
\author{Liran Ron-George}
\author{Ariel Yadin}
\address{Department of Mathematics, Ben-Gurion University of the Negev}
\email{ \{lirar, yadina\}@bgu.ac.il }
\thanks{We thank Y.\ Glasner, Y.\ Hartman, A.\ Karlsson, T.\ Meyerovitch, and M.\ Tointon for useful discussions. 
Research supported by 
Israel Science Foundation, 
grant no.\ 954/21.
The first author 
is also partially supported by the Israel Science Foundation, 
grant no.\ 1175/18.}
\begin{document}

\maketitle

\begin{abstract}
We show that an infinite finitely generated group $G$ is virtually $\Z$
if and only if every Cayley graph of $G$ contains only finitely many Busemann points in its horofunction boundary.
This complements a previous result of the second named author and M.\ Tointon.
\end{abstract}

\section{Introduction}

In a few important works it has been suggested to consider horofunctions as an analogue to linear functionals,
in (non-linear) general metric spaces, see \eg \cite{Karlsson2002nonexpanding, Karlsson21linear, Karlsson21, Yau06}
and references therein.  The space of horofunctions (which will be precisely defined below) has been fruitfully used
in the study of the geometry of metric spaces.  Specifically in the case of hyperbolic spaces and hyperbolic groups 
these have been used quite successfully.  Less research has been devoted to the study of horofunction boundaries in 
the case of ``small'' groups.  This is perhaps due to the fact that the space of horofunctions itself is not invariant 
when switching between different Cayley graphs of the same group.  However, it seems to be true that some properties 
are shared by all Cayley graphs of a given group, and it is very interesting to determine such invariants.  

Horofunctions are very well suited to finding {\em virtual characters}, that is functions whose restriction to a finite 
index subgroup is a non-trivial homomorphism into an Abelian group.  In fact, as will be explained below, 
if a horofunction has a finite orbit under the canonical group action, it is such a virtual character (with the implicit 
finite index subgroup being the stabilizer of the action).
Finding such virtual characters is useful in many situations, because it provides a way of splitting the group into two 
parts.  Specifically, if $\psi : G \to \Z$ is a surjective homomorphism, then $G \cong \Z \ltimes K$ for $K = \ker \psi$.
Even though such a decomposition is a semi-diect product, in many cases it is still useful in the 
geometric or algebraic analysis.  For example, the main part in the proof of
Gromov's theorem regarding groups of polynomial growth \cite{Gromov81}
is to show that every group of polynomial growth admits a virtual character.  
Most of the new proofs of this theorem are actually proofs of this result, including Kleiner's proof \cite{Kleiner10}
and Ozawa's proof \cite{Ozawa18}.
Even before these new proofs appeared it was suggested by Karlsson \cite{Karlsson08} to use the 
horofunction boundary to provide the required virtual character in the polynomial growth case.

\begin{conj} \label{conj: poly growth implies countable boundary}
Let $G$ be a finitely generated group of polynomial growth.
Then, for any Cayley graph of $G$ the horofunction boundary contains a finite orbit
(for the canonical group action).
\end{conj}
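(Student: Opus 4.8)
The plan is to deduce the conjecture from Gromov's polynomial growth theorem \cite{Gromov81} together with a fine asymptotic-geometry input. By Gromov's theorem $G$ has a finite-index normal nilpotent subgroup $N$; since $G$, hence $N$, is infinite, the abelianization of $N$ modulo torsion is $\Z^r$ with $r\ge 1$, so there is a surjection $\pi\colon N\twoheadrightarrow\Z^r$, which is Lipschitz from the word metric of $N$ to a word metric on $\Z^r$. Fix a Cayley graph $(G,S)$ with metric $d$, and on $\Real^r$ define the stable norm
\[
\sigma(a)\ \eqdef\ \lim_{n\to\infty}\frac1n\,\min\bigl\{\,d(e,v)\ :\ v\in N,\ \pi(v)=na\,\bigr\},
\]
which exists by subadditivity and is a genuine norm since $d(e,v)\ge c\,d_{\Z^r}(0,\pi(v))$ for some $c>0$ ($N$ being quasi-isometrically embedded). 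The conjugation action of $G$ on $N^{ab}$ is trivial on $N$, hence factors through the \emph{finite} group $G/N$, and $\sigma$ is invariant under the induced action on $\Real^r$.

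Next, choose an exposed point $\xi_0$ of the dual body $B_\sigma^\circ$ (these exist; indeed $B_\sigma^\circ$ is their closed convex hull), and let $v_0\in\partial B_\sigma$ be the point at which $\sigma$ is differentiable with $\nabla\sigma(v_0)=\xi_0$. (When $G$ is virtually abelian, $\sigma$ is a rational polytope norm and one may take $\xi_0$ a rational vertex.) Pick a sequence $v_k\in N$ pointing to infinity in the direction $v_0$, that is $\pi(v_k)/k\to v_0$, and as efficiently as possible; concretely, build an honest geodesic ray of $(G,S)$ lying inside $N$ by repeating a block of generators lying on the face of $B_\sigma$ that $\xi_0$ exposes (such a ray is simultaneously geodesic and $\sigma$-optimal). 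Let $b$ be a subsequential limit of $d(\cdot,v_k)-d(e,v_k)$; it is a Busemann point of $(G,S)$.

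The crux is a rigidity statement. \emph{Key Lemma.} For any direction $v$ at which $\sigma$ is differentiable, the Busemann point $b_v$ produced from any near-optimal sequence pointing in direction $v$ is independent of the sequence; its restriction to $N$ is the virtual character $w\mapsto-\langle\nabla\sigma(v),\pi(w)\rangle$; and $g\cdot b_v=b_{g\cdot v}$ for all $g\in G$. Granting this for $v=v_0$, the orbit of $b=b_{v_0}$ is contained in $\{\,b_{g\cdot v_0}:g\in G\,\}$, which is finite because $G$ acts on $\Real^r$ through the finite group $G/N$; so $(G,S)$ has a finite orbit in its horofunction boundary, as desired. The lower bound $b_v(w)\ge-\langle\nabla\sigma(v),\pi(w)\rangle$ is the easy half: from $d(e,w^{-1}v_k)\ge\sigma\bigl(\pi(v_k)-\pi(w)\bigr)$ and differentiability of $\sigma$ at $v_0$ one gets it directly once $\sigma(\pi(v_k))=d(e,v_k)+o(1)$, which holds for the ray built above in the nilpotent case (and up to the quasi-isometry constant in general).

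I expect the matching upper bound to be the main obstacle: one must show that prepending a \emph{fixed} element $w^{-1}$ to $v_k$ inflates its length by only $-\langle\nabla\sigma(v),\pi(w)\rangle+o(1)$, uniformly in $w$. For $\Z^r$ this is exact and elementary, since $\sigma$ is polytopal and locally linear. For a genuinely non-abelian nilpotent group — already the discrete Heisenberg group — the ``vertical'' lower-central coordinates of $w^{-1}v_k$ grow polynomially yet stay inside the ``linear regime'' of the word metric (as one sees from the explicit distance formulas), and the desired $o(1)$ correction should be extracted from Pansu's asymptotic-cone theorem by a rescaling-and-continuity argument at the purely horizontal limit point; but upgrading Pansu's $o(\text{scale})$ estimate to $o(1)$ precision, uniformly in $w$, is delicate, and carrying it out for the subgroup metric $d|_N$ — only quasi-isometric to, not equal to, a word metric on $N$ — is a further wrinkle. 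A more analytic alternative, following Karlsson's suggestion \cite{Karlsson08} via the Karlsson–Ledrappier theorem that random walks converge to a Busemann point, stumbles on the fact that symmetric walks on polynomial-growth groups have zero drift, so it would require a zero-drift refinement pinning the hitting measure to a finite orbit.
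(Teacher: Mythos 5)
The statement you are trying to prove is Conjecture \ref{conj: poly growth implies countable boundary}, which the paper explicitly leaves open: there is no proof in the paper to compare against (the closest partial results cited are Walsh's theorem that \emph{nilpotent} groups act on their horoboundary with finite orbits, and the linear-growth results of \cite{TY16}). So your proposal must stand on its own, and it does not: as you yourself acknowledge, the Key Lemma is unproven, and without it the entire argument collapses. Concretely, the finite-orbit conclusion rests on the claim that the Busemann point $b_v$ depends only on the direction $v$ (well-definedness) and that $g\cdot b_v=b_{g\cdot v}$; the equivariance is meaningless before well-definedness is established, and well-definedness is exactly the hard rigidity statement. The matching upper bound you flag --- that prepending a fixed $w^{-1}$ changes $d(e,\cdot)$ along the ray by $-\langle\nabla\sigma(v),\pi(w)\rangle+o(1)$ --- requires upgrading Pansu's $o(n)$ error in the asymptotic comparison between the word metric and the Carnot--Carath\'eodory limit metric to an additive $O(1)$ (indeed $o(1)$) statement, uniformly in $w$. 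No such estimate is known in general for nilpotent groups (the best general bounds are polynomial savings of the form $O(n^{1-\alpha})$), and you need it not for a word metric on $N$ but for the restriction to $N$ of a word metric on $G$, which is only quasi-isometric to one. This is not a routine technical step; it is plausibly as hard as the conjecture itself.

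There is also a second gap you do not flag. Even if you could pin down the restriction $b_v|_N$ as the linear functional $w\mapsto-\langle\nabla\sigma(v),\pi(w)\rangle$, this does not determine the horofunction on all of $G$: distinct horofunctions can agree on the finite-index subgroup $N$ (and, worse, distinct horofunctions can even restrict to the same character on $N$ while differing on the cosets), so ``the orbit of $b_{v_0}$ is contained in $\{b_{g\cdot v_0}:g\in G\}$'' does not follow from controlling restrictions to $N$. You would need to show that near-optimal sequences in a fixed differentiable direction yield a single limit point in $L(G,S)$, i.e.\ convergence of $d(\cdot,v_k)-d(e,v_k)$ at \emph{every} point of $G$, which again is an $O(1)$-precision statement. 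Your fallback via Karlsson--Ledrappier is correctly diagnosed as failing for zero-drift walks. In short: the strategy (reduce to the nilpotent lattice, use the stable norm and the finiteness of $G/N$ to force a finite orbit) is a reasonable and natural line of attack, consistent with Karlsson's suggestion in \cite{Karlsson08} and with Walsh's nilpotent result, but what you have written is a research program with its central lemma missing, not a proof.
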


See also Conjecture 1.3 in \cite{TY16}.

Grigorchuk has conjectured that there is a ``gap'' in the possible growth functions of Cayley graphs,
see \cite{Grigorchuk90}.  Specifically, if the growth is small enough it is conjectured that the group
actually has polynomial growth.  Since horofunctions, and specifically Busemann points (see below) 
are related to geodesics in the graph, these may very well be good tools to study the growth.
By Gromov's theorem mentioned, every polynomial growth group is virtually nilpotent and 
thus admits a virtual character.  
Walsh \cite{Walsh11} has shown that the action of nilpotent groups on their horofunction boundary has 
a finite orbit.
So we have the following (logically weaker) conjecture.

\begin{conj}
\label{conj:weak gap}
Let $\Gamma$ be a Cayley graph.  Assume that the size of the ball of radius $r$ in $\Gamma$ 
is bounded by $C \exp ( C r^\alpha)$ for some $C>0$, $\alpha < \tfrac12$ and all $r \in \N$.

Then, the horofunction boundary of $\Gamma$ contains a finite orbit for the underlying group action.
\end{conj}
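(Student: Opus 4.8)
We content ourselves with outlining the strategy we would follow towards Conjecture~\ref{conj:weak gap}, and the point at which it currently stalls.

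\emph{Reduction to the gap conjecture.} The statement is tailored to be a formal consequence of Grigorchuk's gap conjecture \cite{Grigorchuk90}: if $|B_r|\le C\exp(Cr^\alpha)$ with $\alpha<\tfrac12$ then $\log|B_r|=O(r^\alpha)=o(\sqrt r)$, so under the gap conjecture $\Gamma$ has polynomial growth; by Gromov's theorem \cite{Gromov81} the underlying group $G$ is then virtually nilpotent, and the existence of a finite orbit in the horofunction boundary is exactly Conjecture~\ref{conj: poly growth implies countable boundary}, which is known for virtually nilpotent groups by Walsh \cite{Walsh11}. The reason the conjecture still deserves to be isolated is that its conclusion --- the existence of a single finite orbit --- is far weaker than polynomial growth, so one may hope to prove it without the polynomial-growth classification.

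\emph{An unconditional plan.} Assume $G$ is infinite. If $G$ is virtually $\Z$ we are done: every Cayley graph of such a group has only finitely many Busemann points, at least one exists (any geodesic ray produces one along a subsequence), and a nonempty finite $G$-set always contains a finite orbit. If $G$ has infinitely many ends then, by Stallings' theorem, it splits as a nontrivial amalgam or HNN extension over a finite subgroup and hence has exponential growth, against the hypothesis. So we may assume $\Gamma$ is a Cayley graph of a one-ended group with $\log|B_r|=o(\sqrt r)$, and the task is to exhibit a Busemann point $b$ whose stabiliser $\stab_G(b)=\{g\in G:b(gz)=b(g)+b(z)\text{ for all }z\in G\}$ has finite index. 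Fix a geodesic ray $\gamma$ from the identity; along a subsequence the functions $h_{\gamma(n)}$ converge to a Busemann point. The mechanism should be a pigeonhole over geodesic prefixes: cover the compact horofunction boundary by a finite $\eps$-net, sort the elements of the $r$-sphere according to the nearest net point, and use $\log|B_r|=o(\sqrt r)$ to force geodesics to coalesce at every scale, producing a ray along which the Busemann cocycle is ``periodic'' in many group directions simultaneously. Heuristically this is the phenomenon behind the harmonic-function and reduced-cohomology proofs of Gromov's theorem \cite{Kleiner10,Ozawa18}, which extract a nonconstant equivariant harmonic map and hence a virtual homomorphism; the slack between $\exp(o(\sqrt r))$ and $\exp(\sqrt r)$ should supply the room the pigeonhole needs, and a finite orbit of a horofunction is weaker and more flexible than a homomorphism --- which is the reason to argue inside the horofunction boundary rather than to construct a homomorphism directly.

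\emph{The main obstacle.} Two linked difficulties block this. First, it is not currently known that a group of growth $\exp(o(\sqrt r))$ admits any virtual character at all --- essentially the quantitative sharpening of Gromov's theorem that the gap conjecture predicts --- so an honest proof must supply a self-contained implication ``$\log|B_r|=o(\sqrt r)$ forces some horofunction to have a finite orbit'' that does not route through the polynomial-growth classification, and making the coalescing-geodesics heuristic rigorous at all scales simultaneously is the crux. Second, even granting a virtual character of $G$, it need not be realizable as a horofunction of the \emph{given} Cayley graph, since the horofunction boundary is not a quasi-isometry invariant: for the standard Cayley graph of $\Z^2$ the coordinate character $z\mapsto z_1$ is not a horofunction, whereas the four ``corner'' characters $z\mapsto\pm z_1\pm z_2$ are Busemann points with one-point orbit, and only the latter witness the finite orbit. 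A successful argument must therefore be sensitive to which finite orbits the chosen generating set actually produces.
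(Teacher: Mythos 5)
This statement is left as an open conjecture in the paper --- there is no proof of it anywhere in the text --- and your submission is, by your own account, a strategy outline rather than a proof, so the honest verdict is that a genuine gap remains. Concretely: (a) your conditional reduction routes through Grigorchuk's gap conjecture, which is open; and even granting it, the final step needs a finite orbit in the horofunction boundary of an arbitrary Cayley graph of a \emph{virtually} nilpotent group, whereas Walsh \cite{Walsh11} proves this only for nilpotent groups --- the virtually nilpotent case is exactly the paper's Conjecture \ref{conj: poly growth implies countable boundary}, which is also open. So the ``formal consequence'' chain has two unproven links, not one. (b) Your unconditional plan hinges on the sentence about a ``pigeonhole over geodesic prefixes'' forcing ``geodesics to coalesce at every scale,'' but no lemma is stated, no quantitative estimate relates the $\exp(o(\sqrt r))$ bound to any counting of geodesic prefixes or net points, and no mechanism is given for converting coalescence into finiteness of an orbit of the $G$-action on the boundary. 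You correctly identify this as the crux, which is another way of saying the proof is missing.

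That said, the surrounding observations are sound and worth preserving: the reduction of the infinite-ended case via Stallings to exponential growth is correct; the case where $G$ is virtually $\Z$ does follow from the paper's Theorem \ref{thm:main} (or already from \cite{TY16}), since a nonempty finite invariant set contains a finite orbit; and your $\Z^2$ example accurately illustrates that a virtual character need not be realizable as a horofunction of a given Cayley graph (for the standard generators, $z\mapsto -z_1$ is not a horofunction, while $z \mapsto -z_1-z_2$ is a fixed Busemann point), which correctly explains why any eventual argument must be sensitive to the generating set. But these reductions and caveats do not add up to a proof of the conjecture, and the paper offers none either.
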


Let us remark that even smaller growth functions in Conjecture \ref{conj:weak gap} are of interest.
The state of the art follows from the work of Shalom \& Tao \cite{ShalomTao} proving that any group
of growth bounded by $C \exp ( C (\log r) (\log \log r)^\eps)$ for some $\eps>0$ is virtually nilpotent (and so
actually of polynomial growth).
In forthcoming work by Szusterman and the second named author it is shown that if every Cayley graph of small enough 
growth has a finite orbit in its horofunction boundary, then the underlying group is actually virtually nilpotent.

Part of the difficulty of understanding the horofunction boundary of a Cayley graph,
and especially connecting it to the underlying algebraic structure of the group, is that the boundary changes 
when changing the specific choice of Cayley graph.  
The properties of the boundary which are invariant to changing Cayley graphs are still somewhat mysterious.
As a first step in considering such invariants, in \cite{TY16} it was shown that any graph that has linear volume growth
must have finitely many Busemann points in its boundary (see below for a precise definition of Busemann points).
The other direction is not true for general graphs, 
we provide an example of a graph with arbitrary growth and a boundary 
consisting of just one point, see Example \ref{exm:general graph exp growth}
below.

Our main result, Theorem \ref{thm:main}, is a first step in understanding the relationship between 
algebraic properties of a group and the horofunction boundaries of its Cayley graphs.
Theorem \ref{thm:main} states that an infinite finitely generated group is virtually $\Z$ if and only if 
for every Cayley graph there are finitely many Busemann points in its horofunction boundary.
It is also shown that these conditions are equivalent to every Cayley graph
having a finite horofunction boundary, and that when these equivalent conditions hold, 
every horofunction is a Busemann point.
See below for a precise statement and definitions.

\subsection{Notation and precise statement of results} 

%
%
%
%
%

Let $G$ be a finitely generated group.
Fix a finite symmetric generating set $S$ for $G$; that is, $S$ generates $G$ as a group, $|S| < \infty$, and 
$S = S^{-1} = \{ x^{-1} \ : \ x \in S \}$.
The \define{Cayley graph} of $G$ with respect to $S$, denoted $\Gamma(G,S)$ is the graph
whose vertices are elements of $G$ and edges are given by the relation: $\{x, xs\}$ is an edge for any $x \in G$ 
and $s \in S$.  The graph metric of $\Gamma(G,S)$ is denoted by $d_S$.
It is easy to check that $d_S$ is left-invariant in the sense that $d_S(zx,zy) = d_S(x,y)$ for all $x,y,z \in G$.
Thus it is useful to use the notation $|x|_S = d_S(x,1)$ where $1=1_G$ denote the identity element in the group $G$.
When the metric is clear from the context we will omit the subscript $d = d_S$ and $|x| = |x|_S$. 

Fix some Cayley graph $\Gamma(G,S)$.
Let $L(G,S)$ be the set of all functions $h : G \to \R$ such that $h$ is 
$1$-Lipschitz (\ie $|h(x) - h(y)| \leq d(x,y)$ for all $x,y \in G$) and $h(1) = 0$.
Equip $L(G,S)$ with the topology of pointwise convergence, and note that $L(G,S)$ is compact.
The set $G$ embeds into $L(G,S)$ by identifying $x \in G$ 
with the {\em Busemann function} $b_x : G \to \R$ given by $b_x(y) = d(x,y) - d(x,1)$.
We denote closure of $\{ b_x \ : \  x \in G\}$ in $L(G,S)$ by $\overline{ \Gamma(G,S) }$ and define 
the \define{horofunction boundary}, or \define{horoboundary} of $\Gamma(G,S)$ to be 
$$ \p \Gamma(G,S) : = \overline{\Gamma(G,S)} \setminus \{ b_x \ : \ x \in G \} . $$
Elements of $\p \Gamma(G,S)$ are called \define{horofunctions}.
(The term {\em metric-functional} is also used, with good reason, see \cite{Karlsson22}.)
One notes that all horofunctions are integer valued (because the Busemann functions 
are, and we are dealing with pointwise convergence).

A \define{finite geodesic} in $\Gamma(G,S)$ is a finite sequence $(\gamma_k)_{k=0}^n$
such that 
\begin{align}
\label{eqn:geodesic dfn}
\forall \ 0 \leq m \leq k \leq n & \qquad d(\gamma_k , \gamma_m) = k-m .
\end{align}
An \define{infinite geodesic} in $\Gamma(G,S)$ is an infinite sequence $(\gamma_k)_{k=0}^\infty$ such that
\eqref{eqn:geodesic dfn} holds for all $0 \leq m \leq k$.
For simplicity, when we write {\em geodesic} we refer to an infinite geodesic, and if we require finite geodesics 
we will explicitly state {\em finite}.

It is shown in Lemma \ref{lem:geodesics} below that
if $(\gamma_n)_n$ is an infinite geodesic, then $\gamma_\infty : = \lim_{n \to \infty} b_{\gamma_n}$ exists 
(as a limit in $\overline{ \Gamma (G,S) }$), and that $\gamma_\infty \in \p \Gamma (G,S)$.
Such a horofunction $\gamma_\infty \in \p \Gamma(G,S)$ that is a limit of a geodesic is called a \define{Busemann point}.
We denote by $\p_b \Gamma (G,S)$ the closure (in $\overline{\Gamma(G,S)}$) of the set of all Busemann points.
For a more in depth discussion of Busemann points on Cayley graphs see \cite{WW06} and references therein.

The fact that any Cayley graph contains an infinite geodesic is classical, and can be 
proven using K\"onig's Lemma. (In fact, that proof shows the existence of a {\em bi-infinite} geodesic,
\ie a sequence $(\gamma_t)_{t \in \Z}$ such that $\gamma_0=1$ and such that any finite
subpath is a finite geodesic.  This implies that there are always at least two Busemann points in the 
horoboundary.
See also Karlsson's proof (by another method) in \cite{Karlsson21} 
that the horoboundary (there called metric boundary) of a finitely generated infinite group 
contains at least $2$ elements.)

The horoboundary construction also provides an equivalence relation on geodesics:
two geodesics $\alpha, \beta$ are equivalent if $\alpha_\infty = \beta_\infty$.
Note that if $\alpha \cap \beta := \{ x \ : \ \exists \ t,k \in \N \ , \ \alpha_t = \beta_k = x \}$ is infinite,
then $\alpha_\infty = \beta_\infty$.  However, the opposite is not guarantied.  One may have 
equivalent geodesics which have an empty intersection.
(It is however true that geodesics $\alpha ,\beta$ are equivalent if and only 
if there exists some geodesic $\gamma$ such that $|\gamma \cap \alpha| = |\gamma \cap \beta | = \infty$.
See \cite{Walsh11}.)

Our main result is the following.

\begin{theorem}
\label{thm:main}
Let $G$ be an infinite finitely generated group.  
The following are equivalent.
\begin{enumerate}
\item $G$ is virtually $\Z$.
\item For every Cayley graph of $G$ the Busemann boundary is finite.
\item For every Cayley graph of $G$ the horofunction boundary is finite.
\end{enumerate}
Moreover, when $G$ is virtually $\Z$ any horofunction is a Busemann point.
\end{theorem}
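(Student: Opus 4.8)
The plan is to prove the cycle $(1)\Rightarrow(3)\Rightarrow(2)\Rightarrow(1)$ and to extract the final assertion from the proof of $(1)\Rightarrow(3)$. The implication $(3)\Rightarrow(2)$ is immediate: Busemann points are horofunctions, so finiteness of $\p\Gamma(G,S)$ forces finiteness of $\p_b\Gamma(G,S)$. For $(1)\Rightarrow(3)$ I would invoke the structure theorem for two-ended groups — a virtually $\Z$ group has a finite normal subgroup $N$ with $G/N\cong\Z$ or $G/N\cong D_\infty$ — and first treat $G/N\cong\Z$. Let $\pi\colon G\to\Z$ be the quotient and fix $\tilde t$ with $\pi(\tilde t)=1$, so that $\langle\tilde t\rangle\cong\Z$ has index $|N|$ in $G$ and left multiplication $\lambda_{\tilde t}$ is an isometry of $\Gamma(G,S)$ raising the $\pi$-coordinate by $1$. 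Given a horofunction $h=\lim_n b_{x_n}$ (so $|\pi(x_n)|\to\infty$, the fibres of $\pi$ being the finite cosets of $N$), one pushes the $N$-part of $x_n$ to the left, using normality and finiteness of $N$, and passes to a subsequence to write $x_n=w\tilde t^{j_n}$ with $w$ ranging over a fixed set of at most $|N|^2$ elements and $j_n\to+\infty$ or $j_n\to-\infty$. As $b_{w\tilde t^{j}}=w\cdot b_{\tilde t^{j}}$ for the canonical action on $\overline{\Gamma(G,S)}$, everything reduces to the \emph{Key Lemma}: the limits $\beta^{\pm}:=\lim_{j\to\pm\infty}b_{\tilde t^{j}}$ exist in $\overline{\Gamma(G,S)}$. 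The case $G/N\cong D_\infty$ is then reduced to this one by working in the index-two subgroup mapping onto $\Z$ and allowing an extra left translation by a reflection representative.

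Granting the Key Lemma, every horofunction equals $w\cdot\beta^{+}$ or $w\cdot\beta^{-}$, so $\p\Gamma(G,S)$ has at most $2|N|^2$ elements; this proves $(1)\Rightarrow(3)$, hence also $(1)\Rightarrow(2)$ (which could alternatively be cited from \cite{TY16} via linear volume growth). For the ``moreover'' I would show each such horofunction is a Busemann point by letting $\gamma$ be a subsequential limit of geodesics from $1$ to the $x_n$ (possible since $\Gamma(G,S)$ is locally finite) and proving $\gamma_\infty=h$. The inequality $\gamma_\infty\geq h$ is formal; the reverse — controlling the far-away values of $b_{x_n}$ by the near geodesic $\gamma$, which is false for general graphs — again requires the quasi-line structure, for instance the fact that once $\pi(g)$ is large, a geodesic from $g$ to a fixed $y$ can be routed optimally through one of boundedly many vertices just above $y$'s level.

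The Key Lemma is where the real work lies. It is equivalent to the assertion that $\ell\mapsto|w\tilde t^{\ell}|_S$ is \emph{eventually quasi-linear} — $|w\tilde t^{\ell+p}|_S=|w\tilde t^{\ell}|_S+q$ for all large $\ell$, with $p,q$ independent of $w$ — and I would prove it by tracking how the ``$\tilde t$-frontier'' of the ball $B_S(1,r)$ (the vertices of largest $\pi$-coordinate in $B_S(1,r)$, together with the bounded combinatorial record of which $N$-cosets occur near it) evolves with $r$. Since $\Gamma(G,S)$ is, up to the finite group $N$, the Cayley graph of $\Z$ (resp.\ $D_\infty$), this data ranges over a finite set, so a pigeonhole argument over $r$ forces it to become eventually periodic in $r$, and unwinding the resulting recursion gives the eventual quasi-linearity, hence the convergence of $b_{\tilde t^{j}}$. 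This frontier analysis, together with the ``near controls far'' step above, is the main obstacle in the positive direction.

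For $(2)\Rightarrow(1)$ I argue the contrapositive in two steps. \emph{Step A.} For any generating set $S$, the group $G$ acts on the finite set $\p_b\Gamma(G,S)$, so every Busemann point $\xi$ has a finite-index stabiliser $G_\xi$; from $g\cdot\xi=\xi$ one computes $\xi(gy)=\xi(g)+\xi(y)$ for $g\in G_\xi$ and all $y\in G$, so $\xi|_{G_\xi}\colon G_\xi\to\Z$ is a homomorphism, and it is nonzero, since otherwise $\xi$ would be constant on each of the finitely many sets $G_\xi y$ and hence bounded, contradicting $\xi(\gamma_k)=-k\to-\infty$ along its defining ray. Thus a finite-index subgroup of $G$ surjects onto $\Z$. \emph{Step B.} Assuming $G$ is infinite but not virtually $\Z$, the kernel $K$ of such a surjection $\phi\colon G_0\twoheadrightarrow\Z$ is infinite and $G_0=K\rtimes\langle\tilde t\rangle$; the goal is to exhibit a generating set $S$ of $G$ whose Cayley graph has infinitely many Busemann points, contradicting the hypothesis. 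Taking $S$ to contain a generating set of $G_0$ and coset representatives of $G/G_0$ makes $\langle\tilde t\rangle$ undistorted, and one builds a geodesic ray from $1$ following the $\tilde t$-direction together with its $K$-translates $\gamma^{(k)}$, which are again geodesic rays with Busemann points $\gamma^{(k)}_\infty=k\cdot\xi_0$ for a fixed Busemann point $\xi_0$ — exactly as the rays $\{m_0 e_1+n e_2:n\geq0\}$ do in $\Z^2$. The crux, and the principal obstacle of the whole argument, is to arrange via the choice of $S$ that the $K$-orbit of $\xi_0$ is infinite (equivalently, that no finite-index subgroup of $K$ fixes $\xi_0$); this must cover the degenerate cases in which $K$ is torsion or only normally finitely generated, and I expect it requires a genuinely combinatorial argument with the kernel coordinates rather than a reduction to an embedded $\Z^2$.
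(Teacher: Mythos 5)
Your outline correctly isolates the two places where the real content lies, but at both of them it stops at a sketch with an acknowledged obstacle, and in both cases the missing step is precisely the paper's new contribution, so the proposal as written has genuine gaps. For $(2)\Rightarrow(1)$, your Step A (finite Busemann boundary $\Rightarrow$ a finite-index subgroup $K$ carries a nontrivial homomorphism $\xi|_{K}\colon K\to\Z$) matches the paper. But your Step B --- ``arrange via the choice of $S$ that the $K$-orbit of $\xi_0$ is infinite'' when $\ker$ is infinite --- is exactly the crux, and you explicitly leave it open. The paper resolves it by exploiting the ``for \emph{every} Cayley graph'' hypothesis in a targeted way: having found $x\in K$ with $h(x)=|x|_S$ (so $|x^t|_S=t|x|_S$), it enlarges the generating set to $U=S\cup\{x,x^{-1}\}$, which makes every translate $(gx^t)_t$ an honest geodesic in $\Gamma(G,U)$, and then proves that $g\mapsto g.\gamma_\infty$ is \emph{injective} from $\ker(h|_K)$ into $\p_b\Gamma(G,U)$: if $g.\gamma_\infty=\gamma_\infty$, a $U$-geodesic from $g$ to $x^t$ is forced by a word-length count (using $h/|x|_S$ as a $1$-Lipschitz certificate) to use only $x^{\pm1}$-edges, so $g$ is a power of $x$, and $h(g)=0$ then gives $g=1$. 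Hence $\ker(h|_K)$ is finite and $K/\ker(h|_K)\cong\Z$, with no need to construct a bespoke generating set or an embedded family of rays. Without some substitute for this injectivity argument your Step B does not go through (and the degenerate cases you mention --- $K$ torsion, not finitely generated --- are exactly where a naive ``embedded $\Z^2$'' picture fails).

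For $(1)\Rightarrow(3)$ and the ``moreover'', your reduction to the Key Lemma (existence of $\lim_{j\to\pm\infty}b_{\tilde t^{\,j}}$ for an arbitrary section $\tilde t$ of $G/N\to\Z$) is sound in outline, but the Key Lemma itself is only sketched via an eventual-periodicity/pigeonhole argument on the ``frontier'', and your route to Busemann-ness (subsequential limits of finite geodesics plus a ``near controls far'' estimate) is a second unproven step. The paper sidesteps both at once: it again produces a specific $1\neq x$ in the kernel of the action with $h(x)=|x|_S$, shows there is a genuine geodesic $\alpha$ in $\Gamma(G,S)$ with $\alpha_{t|x|}=x^t$ (and similarly $\beta$ for $x^{-1}$), so that convergence of $b_{\alpha_t}$ is automatic by the monotonicity available along geodesics; since $\langle x\rangle$ has finite index, any horofunction $\lim_n b_{y_n}$ has infinitely many $y_n$ on one of the finitely many translates $r\alpha$ or $r\beta$ and therefore \emph{equals} $r.\alpha_\infty$ or $r.\beta_\infty$. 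This simultaneously bounds $|\p\Gamma(G,S)|$ by $2[G:\langle x\rangle]$ and shows every horofunction is a Busemann point, with no separate quasi-linearity or approximation argument. I would encourage you to replace your section-and-pigeonhole plan with this device: the single observation that a Busemann point $h$ fixed by a finite-index subgroup supplies an element $x$ with $h(x)=|x|_S$, hence an undistorted cyclic subgroup lying on a geodesic, is what makes both hard directions tractable.
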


Recall that for a group property $\mathcal{P}$, the property of being virtually $\mathcal{P}$
is containing a finite index subgroup with the property $\mathcal{P}$.
Thus, virtually $\Z$ means containing a finite index infinite cyclic subgroup.

\begin{proof}[Proof of Theorem \ref{thm:main}]
$(1) \Rightarrow (2)$  was first shown in \cite{TY16},
and holds for general graphs of linear growth.
In Proposition \ref{prop:linear growth implies finite Busemann} we provide a very elegant and short proof by Sam Shepperd (communicated to us by M.\ Tointon).

Our main two new contributions are $(2) \Rightarrow (1)$
and the assertion that if $G$ is virtually $\Z$ then every horofunction
is a Busemann point.

The implication $(2) \Rightarrow (1)$ is shown in Lemma \ref{lem:main lem} below.

Theorem \ref{thm:horofunction are Busemann} below shows that when $G$ is virtually 
$\Z$ then in any Cayley graph, every horofunction is a Busemann point.
This gives $(1) \Rightarrow (3)$ by combining the implication $(1) \Rightarrow (2)$
with Theorem \ref{thm:horofunction are Busemann}.

$(3) \Rightarrow (2)$ is trivial since the Busemann boundary is contained in the 
horoboundary.
\end{proof}

Somewhat frustratingly, we have not proved the following.

\begin{conj} \label{conj:exists a finite Busemann boundary}
Let $G$ be an infinite finitely generated group and assume that there exists some Cayley graph of $G$ for which $\p_b \Gamma(G,S)$ is finite.  

Then, 
$G$ is virtually $\Z$.
\end{conj}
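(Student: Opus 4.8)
The natural strategy is to show that such a $G$ must be two-ended, since an infinite finitely generated group is virtually $\Z$ exactly when it has two ends; as an infinite finitely generated group has one, two, or infinitely many ends, it suffices to exclude one end and infinitely many ends.

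Excluding infinitely many ends should be routine. Every end of $\Gamma(G,S)$ is the limit of some geodesic ray based at $1$ (a standard K\"onig argument, as for the bi-infinite geodesics above), so it is enough to check that geodesic rays converging to distinct ends yield distinct Busemann points. If $\alpha_\infty = \beta_\infty$, then by Walsh's criterion quoted above there is a geodesic $\gamma$ with $|\gamma\cap\alpha| = |\gamma\cap\beta| = \infty$; but a geodesic sharing infinitely many vertices with a ray $\alpha$ converges to the same end as $\alpha$ (for every finite $F$ both are eventually confined to a single component of $\Gamma(G,S)\setminus F$, and the shared vertices force those components to agree), so $\alpha$ and $\beta$ are cofinal to a common end. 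Hence infinitely many ends would produce infinitely (in fact uncountably) many Busemann points, in every Cayley graph.

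For the algebraic backbone in the remaining case I would extract a virtual character as in the introduction: since the $G$-action on $\overline{\Gamma(G,S)}$ preserves the finite set $\p_b\Gamma(G,S)$, the common stabilizer $H$ of all Busemann points has finite index, and for each Busemann point $\xi$ the restriction $-\xi|_H$ is a homomorphism $\psi_\xi\colon H\to\Z$. Not all of them can be trivial: if they were, the identity $(g^{-1}\cdot\xi)(h)=\xi(gh)-\xi(g)$ would make each $\xi$ constant on left cosets of $H$, hence bounded, contradicting the fact that a Busemann point decreases linearly along its defining geodesic. Thus $H$ --- and so $G$ --- surjects onto $\Z$; let $K=\bigcap_\xi \ker\psi_\xi$. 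To finish one needs two further facts: that the $\psi_\xi$ span only a rank-one subgroup of $\mathrm{Hom}(H,\Z)$, and that $K$ is finite. Granting both, $H$ is a finite extension of $\Z$, so $G$ is virtually $\Z$. (Conversely, were $G$ one-ended, $H$ would be one-ended, forcing $K$ infinite and every horosphere of every Busemann point of $\Gamma(G,S)$ to be infinite --- exactly the configuration to be contradicted.)

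The main obstacle is this last step, and it is precisely what blocks a direct appeal to Theorem \ref{thm:main}: the finiteness of $\p_b\Gamma(G,S)$ is not known to be independent of the generating set. One would want to argue that an infinite horosphere in a one-ended Cayley graph forces infinitely many pairwise-inequivalent geodesic rays, hence infinitely many Busemann points; but a single Busemann point can be the limit of geometrically very different rays, so a bound on the number of Busemann points does not obviously bound the number of independent directions, nor the width of horospheres. Breaking this deadlock seems to need a genuinely geometric input: either a converse to the linear-growth criterion of \cite{TY16} (deducing linear growth, hence virtual $\Z$-ness, from finiteness of the Busemann boundary), or a proof that finiteness of $\p_b\Gamma(G,S)$ for one generating set implies it for all, after which Lemma \ref{lem:main lem} applies. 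I would bet on the former, aiming to show that super-linear growth produces, at each radius $r$, order-$r$ finite geodesic segments that are laterally separated enough to survive the passage to infinite rays as distinct Busemann points.
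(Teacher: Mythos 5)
This is Conjecture \ref{conj:exists a finite Busemann boundary}, which the paper explicitly leaves open (``Somewhat frustratingly, we have not proved the following''), and your proposal is not a proof of it either --- as you candidly say yourself. What you have is a correct reduction plus an accurate diagnosis of the obstruction. The sound parts: the reformulation via ends is legitimate (two-ended $\Leftrightarrow$ virtually $\Z$), and the exclusion of infinitely many ends does work --- a geodesic ray from $1$ leaves every ball permanently, so the K\"onig limit of geodesic segments $[1,x_n]$ with $x_n\to\omega$ does converge to $\omega$, and Walsh's criterion as quoted in the paper shows that equivalent geodesics are cofinal to a common end, so distinct ends give distinct Busemann points. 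Likewise, your extraction of a nontrivial homomorphism $\psi_\xi\colon H\to\Z$ from the finite invariant set $\p_b\Gamma(G,S)$ is correct and parallels Proposition \ref{prop:stabilizer} together with Lemma \ref{lemma:x for finite index} (though note the paper needs the stronger, norm-realizing conclusion $h(y)=-|y|_S$, not mere nontriviality, to feed into Lemma \ref{lemma:map and geodesics}).

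The genuine gap is exactly the two ``further facts'' you grant yourself: that the $\psi_\xi$ span a rank-one subgroup of $\mathrm{Hom}(H,\Z)$ and, above all, that the common kernel $\bigcap_\xi\ker\psi_\xi$ is finite. This is precisely the obstruction the paper itself isolates in Section 2 (the subgroup $F$ and the open Question about its finite generation), and it is precisely why Theorem \ref{thm:main} needs finiteness of the Busemann boundary for a whole family of generating sets: Lemma \ref{lemma:at least ker(h) busemann} bounds $|\ker(h|_K)|$ only by $|\p_b\Gamma(G,U)|$ for the \emph{enlarged} set $U=S\cup\{x,x^{-1}\}$, and finiteness of the Busemann boundary is not known to be preserved under change of generating set. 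Your closing paragraph correctly names the two possible escape routes (a converse to the linear-growth criterion of \cite{TY16}, or generating-set independence of the finiteness), but neither is carried out, and the heuristic about super-linear growth producing ``laterally separated'' geodesic segments that survive as distinct Busemann points is exactly the step that fails in general graphs (Example \ref{exm:general graph exp growth}) and is unproven for Cayley graphs. So: correct framing, correct identification of the hard point, no proof of the hard point --- the one-ended case, which is the entire content of the conjecture, remains open in your write-up just as it does in the paper.
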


\section{Group Action}

There is a natural action of $G$ on $\overline{\Gamma(G,S)}$,
given by $x.h(y) = h(x^{-1}y) - h(x^{-1})$ for $x \in G$ and $h \in \overline{\Gamma(G,S)}$.
The following basic properties are very easy to verify, and we leave the proof to the reader.

\begin{proposition}
\label{prop:action}
The action of $G$ on $\overline{\Gamma(G,S)}$ has the following properties:
\begin{enumerate}
    \item The action is continuous, \ie if $h_n \to h$ then $x.h_n \to x.h$.
    \item
    The map $x \mapsto b_x$ is equivariant, \ie $x.b_y = b_{xy}$. 
    \item
    The horoboundary $\partial \Gamma(G,S)$ and the Busemann boundary $\partial_b \Gamma(G,S)$ are invariant subsets.
\end{enumerate}
\end{proposition}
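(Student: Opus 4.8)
The plan is to verify the three assertions by unwinding the defining formula $x.h(y) = h(x^{-1}y) - h(x^{-1})$, after first recording two preliminary facts that the statement tacitly relies on. The first is that $x.h$ really does lie in $L(G,S)$: the normalization $x.h(1) = h(x^{-1}) - h(x^{-1}) = 0$ is immediate, and for the $1$-Lipschitz bound one uses left-invariance of $d$ to write $|x.h(y) - x.h(z)| = |h(x^{-1}y) - h(x^{-1}z)| \leq d(x^{-1}y, x^{-1}z) = d(y,z)$. The second is that this is a genuine left action, i.e. $1.h = h$ and $x.(y.h) = (xy).h$; both are one-line computations using $(xy)^{-1} = y^{-1}x^{-1}$ together with the cancellation of the additive normalizing terms. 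In particular each map $h \mapsto x.h$ is a bijection of $L(G,S)$ whose inverse is $h \mapsto x^{-1}.h$.

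For part (1): if $h_n \to h$ pointwise, then for every fixed $y \in G$ one has $x.h_n(y) = h_n(x^{-1}y) - h_n(x^{-1}) \to h(x^{-1}y) - h(x^{-1}) = x.h(y)$, which is precisely pointwise convergence $x.h_n \to x.h$. Combined with the preliminary remarks, this shows $h \mapsto x.h$ is a homeomorphism of $L(G,S)$. For part (2): I would simply compute, using left-invariance of $d$ twice and cancelling the terms $d(y,1)$,
\[
x.b_y(z) = b_y(x^{-1}z) - b_y(x^{-1}) = d(y,x^{-1}z) - d(y,x^{-1}) = d(xy,z) - d(xy,1) = b_{xy}(z) .
\]

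For part (3): by part (2) the set $\{ b_x : x \in G \}$ is invariant, since $x.b_y = b_{xy}$ and conversely $b_w = x.b_{x^{-1}w}$; as $h \mapsto x.h$ is a homeomorphism of $L(G,S)$, it therefore maps the closure $\overline{\Gamma(G,S)}$ of this set onto itself, and hence also preserves its complement $\p \Gamma(G,S) = \overline{\Gamma(G,S)} \setminus \{ b_x : x \in G \}$. For the Busemann boundary, the key point is that left-multiplication by $x$ is an isometry of $\Gamma(G,S)$, so if $(\gamma_k)_k$ is an infinite geodesic then so is $(x\gamma_k)_k$, and by Lemma \ref{lem:geodesics} its limit $(x\gamma)_\infty$ is a Busemann point; applying parts (1) and (2) gives $x.\gamma_\infty = x.\lim_k b_{\gamma_k} = \lim_k b_{x\gamma_k} = (x\gamma)_\infty$. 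Thus the set of Busemann points is invariant, and one more appeal to the homeomorphism property of $h \mapsto x.h$ shows that its closure $\p_b \Gamma(G,S)$ is invariant.

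There is no genuine obstacle here — as the authors remark, everything is a routine verification — so the only thing to be careful about is the order of operations: one must check that the action is well defined into $L(G,S)$ and is a homeomorphism \emph{before} invoking ``a homeomorphism commutes with closures'' to deduce the invariance statements in part (3).
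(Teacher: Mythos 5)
Your proof is correct; the paper itself gives no argument here, explicitly leaving the verification to the reader, and your computations (well-definedness of the action, continuity from pointwise convergence, the equivariance identity via left-invariance of $d$, and the homeomorphism/closure argument for invariance of $\p\Gamma(G,S)$ and $\p_b\Gamma(G,S)$) are exactly the routine checks the authors intend. Nothing further is needed.
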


Two basic properties of horofunctions we require are the following.
The proofs are included for completeness.

\begin{proposition}
\label{prop:unbounded horofunctions}
Let $\Gamma(G,S)$ be a Cayley graph.

If $b_{x_n} \to h$ and $h \in \p \Gamma(G,S)$, 
then $|x_n| \to \infty$.

Also, if $b_{x_n} \to h$ and $|x_n| \to \infty$,
then for every $r \geq 0$ there exists $x \in G$ with $h(x) = - |x| = -r$.

In particular, horofunctions are not bounded from below.
\end{proposition}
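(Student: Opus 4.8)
The plan is to treat the three assertions in order, the last being an immediate consequence of the first two, and in both of the first two to exploit the single fact that closed balls in a Cayley graph with finite generating set $S$ are finite.

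For the first assertion I argue by contraposition. If $|x_n| \not\to \infty$, then there is a bound $M$ and a subsequence along which $|x_{n_k}| \leq M$; since the ball of radius $M$ around $1$ is finite, by pigeonhole there is a single element $x \in G$ with $x_{n_k} = x$ for infinitely many $k$. Along that sub-subsequence $b_{x_{n_k}} = b_x$ is constant, so its pointwise limit is $b_x$; as $b_{x_n} \to h$, we get $h = b_x \in \{ b_z : z \in G \}$, contradicting $h \in \p \Gamma(G,S) = \overline{\Gamma(G,S)} \setminus \{ b_z : z \in G\}$. Hence $|x_n| \to \infty$.

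For the second assertion, fix $r \geq 0$. Since $|x_n| \to \infty$, we have $|x_n| \geq r$ for all large $n$; for each such $n$ choose a finite geodesic $(\gamma^{(n)}_k)_{k=0}^{|x_n|}$ with $\gamma^{(n)}_0 = 1$ and $\gamma^{(n)}_{|x_n|} = x_n$. The defining property \eqref{eqn:geodesic dfn} gives $|\gamma^{(n)}_r| = d(\gamma^{(n)}_r,\gamma^{(n)}_0) = r$ and $d(x_n,\gamma^{(n)}_r) = |x_n| - r$, so
$$ b_{x_n}(\gamma^{(n)}_r) = d(x_n,\gamma^{(n)}_r) - |x_n| = -r . $$
The points $\gamma^{(n)}_r$ all lie in the finite set $\{ z : |z| = r \}$, so some fixed $z \in G$ with $|z|=r$ satisfies $\gamma^{(n)}_r = z$ for infinitely many $n$. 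Evaluating the convergence $b_{x_n}\to h$ at $z$ along that subsequence yields $h(z) = -r = -|z|$, which is what was claimed (note $z$ is allowed to depend on $r$).

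Finally, the third assertion follows: any $h \in \p\Gamma(G,S)$ is a pointwise limit of Busemann functions $b_{x_n}$ by definition of the closure $\overline{\Gamma(G,S)}$, the first assertion forces $|x_n| \to \infty$, and then the second assertion produces for every $r \geq 0$ an element on which $h$ equals $-r$, so $h$ is not bounded from below. I do not anticipate a genuine obstacle here; the only mild care needed is the bookkeeping of nested subsequences when invoking finiteness of balls together with pigeonhole.
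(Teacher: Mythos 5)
Your proof is correct and follows essentially the same route as the paper's: contraposition plus finiteness of balls for the first claim, and evaluating $b_{x_n}$ at the radius-$r$ point of a finite geodesic from $1$ to $x_n$ for the second. The only cosmetic difference is that you extract the limit value via pigeonhole on the finite sphere of radius $r$, whereas the paper picks a single $x_n$ whose Busemann function already agrees with $h$ on the whole ball of radius $r$ (using that these functions are integer-valued, so pointwise convergence stabilizes on finite sets); both devices are equally valid.
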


\begin{proof}
Assume that $b_{x_n} \to h \in \overline{ \Gamma(G,S)}$ 
and $|x_n| \leq r$ for all $n$.
Then, since the ball of radius $r$ is finite, it must be
that there exists some $|x| \leq r$ such that $x_n = x$
for infinitely many $n$.  Thus, $h = b_x \not\in \p \Gamma(G,S)$.
This shows that if $b_{x_n} \to h \in \p \Gamma(G,S)$ then 
$|x_n| \to \infty$.

Now, assume that $b_{x_n} \to h$ with $|x_n| \to \infty$.
Fix some $r \geq 0$, let $z = x_n \in G$ be such that 
$b_z(y) = h(y)$ for all $|y| \leq r$.  
Since $|x_n| \to \infty$ we can choose $z$ so that $|z| > r$.
Let $\gamma = (1=\gamma_0, \ldots, \gamma_{|z|} = z)$ be a finite  
geodesic and let $x = \gamma_r$.  Note that $|x|=r$ and that
$$ h(x) = d(\gamma_{|z|} , \gamma_r) - d(\gamma_{|z|} , \gamma_0) = - r = - |x| . $$
\end{proof}

\begin{proposition} \label{prop:stabilizer}
Let $h \in \p \Gamma(G,S)$ be a horofunction.
Let $K = \stab(h) = \{ x \in G \ : \ x.h=h \}$ be the stabilizer of $h$.

Then, the restriction $h \big|_K$ is a
homomorphism from $K$ into $\Z$ (the additive group of integers).
\end{proposition}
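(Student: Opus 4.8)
The plan is to unwind the definition of the stabilizer directly; I do not expect any real obstacle here, as the statement is essentially a reformulation of the invariance $x.h = h$.

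First I would record that $K = \stab(h)$ is a subgroup of $G$. This is immediate from the fact that $x.h(y) = h(x^{-1}y) - h(x^{-1})$ defines a genuine left action of $G$ on $\overline{\Gamma(G,S)}$ (the one-line check that $(xy).h = x.(y.h)$), together with the general fact that the stabilizer of a point under a group action is a subgroup. In particular, $x \in K$ forces $x^{-1} \in K$; this closure under inversion is the only point below that needs a moment's care.

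Next comes the key identity. Fix $x \in K$. Unwinding the condition $x.h = h$ at an arbitrary point $y \in G$ gives $h(x^{-1}y) - h(x^{-1}) = h(y)$, that is, $h(x^{-1}y) = h(y) + h(x^{-1})$. Since $x^{-1}$ ranges over all of $K$ as $x$ does, this is the same as
\[
 h(xy) = h(x) + h(y) \qquad \textrm{for all } x \in K,\ y \in G .
\]
(Taking $y = 1$ this is consistent with $h(1) = 0$, which holds because $h \in L(G,S)$.)

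Finally, restricting the displayed identity to $y \in K$ shows that $h\big|_K \colon K \to \R$ is a group homomorphism into $(\R,+)$. But every horofunction is integer valued — the Busemann functions $b_x$ are, and pointwise limits of integer-valued functions remain integer valued — so the image of $h\big|_K$ is contained in $\Z$. Hence $h\big|_K$ is a homomorphism from $K$ into the additive group $\Z$, as claimed.
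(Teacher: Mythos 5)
Your argument is correct and is essentially the paper's own proof: both rest on the single computation that $x\in K$ (hence $x^{-1}\in K$, since the stabilizer is a subgroup) yields $h(xy)=h(x)+h(y)$, with integer-valuedness of horofunctions supplying the codomain $\Z$. The extra care you take with closure under inversion and with the action axioms is exactly the content the paper leaves to the reader.
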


\begin{proof}
This follows immediately since for any $x \in K$ and any $y \in G$ we have
$$ h(x y) = x^{-1}.h(y) + h(x) = h(y) + h(x) . $$
\end{proof}

We have seen that restricting a horofunction to the stabilizer subgroup 
(for the canonical group action) results in a homomorphism into $\Z$.
One naive approach to attempt to provide a simple proof that a finite horoboundary
implies that the group is virtually $\Z$ 
(similar to Conjecture \ref{conj:exists a finite Busemann boundary}) 
would be as follows.

$G$ acts on $\p \Gamma(G,S)$.  
Define:
$$ K= \{ x \in G \ : \ \forall \ h \in \p \Gamma(G,S) \ , \ x.h=h \} $$
$$ F = \{ x \in  K \ : \ \forall \ h \in \p \Gamma(G,S) \ , \ h(x) =0 \} . $$
One can show that $F$ is a subgroup.
In fact it is possible to show that if $x_1, \ldots, x_n \in F$
then the group they generate, $\IP{ x_1, \ldots, x_n }$,
is contained in a ball of finite radius $r$, which depends only on
the generators $x_1, \ldots, x_n$.

At the moment we do not know an answer to the following:
\begin{ques}
Is the subgroup $F$ finitely generated when the horoboundary is finite? 
Is $F$ always finitely generated (in any Cayley graph)?
\end{ques}

Because of the properties above, if $F$ is finitely generated, then it is finite.

When the horoboundary is finite, we have that $[G:K] < \infty$ 
and we can also write 
$\p \Gamma (G,S) = \{ h_1, \ldots, h_d \}$.
The map $\Psi (x) = (h_1(x), \ldots, h_d(x))$ is then a map into $\Z^d$
whose restriction to $K$ is a homomorphism.  The kernel of $\Psi \big|_K$
is exactly $F$.  So if $F$ is finite, then one could show that $K$ is virtually
$\Z^n$ for some $n \leq d$, implying that also $G$ is.  
Since $G$ has a finite horoboundary, one can then 
prove that actually $n=1$, along the lines of our proof of Theorem \ref{thm:main}.

\section{Finite Busemann boundaries}

In this section we prove the main new part of Theorem \ref{thm:main},
stated as the following lemma.

\begin{lemma} \label{lem:main lem}
Let $G$ be a finitely generated infinite group.  Assume that $S$ is a finite symmetric generating set 
such that for any $x \in G$ we have that the Busemann boundary $\p_b \Gamma(G, U)$
is finite, where $U = S \cup \{x,x^{-1}\}$.  

Then, $G$ is virtually $\Z$.
\end{lemma}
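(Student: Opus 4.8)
The plan is to show that a group $G$ as in the hypothesis — where adding a single extra generator to $S$ keeps the Busemann boundary finite — must contain a finite-index copy of $\Z$. The strategy is to produce a virtual character $\psi : G \to \Z$ with the property that $\ker \psi$ is finite; then $G$ is virtually $\Z$. The natural source of such a character is a Busemann point with finite orbit, whose restriction to its stabilizer is a homomorphism into $\Z$ by Proposition~\ref{prop:stabilizer}. Since for the Cayley graph $\Gamma(G,S)$ the Busemann boundary $\p_b\Gamma(G,S)$ is finite (take $x=1$ in the hypothesis), the stabilizer $K$ of any Busemann point has finite index, and $h|_K : K \to \Z$ is a homomorphism. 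The content of the lemma must be that $G$ is not merely virtually nilpotent of higher rank or virtually $\Z^n$ with $n\geq 2$ or something with a large kernel; the finiteness of all the nearby Busemann boundaries is what pins the rank down to exactly one.

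First I would fix a Busemann point $h$ coming from an infinite geodesic $\gamma$, so $h = \gamma_\infty$, and set $K = \stab(h)$, which has finite index. By Proposition~\ref{prop:unbounded horofunctions}, $h$ takes the value $-r$ for every $r\geq 0$, so $h|_K$ — if I can show $K$ contains elements on which $h$ is unbounded — is a nontrivial homomorphism $K \to \Z$; more care is needed to ensure $h$ is actually unbounded \emph{along $K$}, not just along $G$. The key move is then to control the kernel: let $N = \ker(h|_K) = \{x \in K : h(x) = 0, \ x.h = h\}$. I would aim to prove $N$ is finite. The idea is that if $N$ were infinite, one could find an element $x \in N$ of large word-length, and adjoin it to $S$ to get $U = S\cup\{x,x^{-1}\}$; in the Cayley graph $\Gamma(G,U)$ the vertex $x$ is now at distance $1$ from the identity, which should create \emph{new} Busemann points — roughly, geodesics that repeatedly use the new generator $x$ (and its conjugates under $N$) — contradicting finiteness of $\p_b\Gamma(G,U)$. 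Making this precise is the crux: I expect to argue that distinct cosets, or distinct "directions" inside $N$, give rise to distinct Busemann points in the enlarged Cayley graph, so that $N$ infinite forces infinitely many Busemann points for some $U$.

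Once $N$ is shown to be finite, $K/N$ embeds in $\Z$ via $h|_K$, so $K/N$ is either trivial or infinite cyclic; since $G$ (hence $K$) is infinite and $N$ is finite, $K/N \cong \Z$, so $K$ is virtually $\Z$, and therefore $G$ is virtually $\Z$, as $[G:K] < \infty$. The main obstacle is the middle step — proving $N$ finite by manufacturing new Busemann points in $\Gamma(G,U)$ from an infinite $N$. The delicate points there are: (i) verifying that an element $x \in N$ of large $S$-length genuinely behaves like a "short" generator in $U$ in a way that produces a geodesic (one needs that $x$ is not a short product of other $U$-generators, which is where "large word-length relative to the previously chosen generators" must be quantified and iterated); and (ii) showing the resulting horofunctions are pairwise distinct Busemann points. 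I would likely set this up inductively, enlarging the generating set one element of $N$ at a time and using the hypothesis that finiteness of the Busemann boundary persists under each such enlargement, deriving a contradiction with the finiteness bound after finitely many steps. The homomorphism/stabilizer bookkeeping at the two ends is routine given Propositions~\ref{prop:action}, \ref{prop:unbounded horofunctions}, and~\ref{prop:stabilizer}.
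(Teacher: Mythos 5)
Your outer skeleton matches the paper's: pass to a finite-index subgroup $K$ on which a Busemann point $h$ restricts to a homomorphism into $\Z$, show that $\ker(h|_K)$ is finite, and conclude that $K$ (hence $G$) is virtually $\Z$. The two steps you flag as needing care are real; the first (that $h$ is unbounded along $K$, i.e.\ that some $1\neq y\in K$ satisfies $h(y)=-|y|_S$) is exactly what Lemma~\ref{lemma:x for finite index} supplies, by translating a geodesic so that it meets $K$ infinitely often. But your mechanism for the central step --- finiteness of $N=\ker(h|_K)$ --- has a genuine gap. You propose to adjoin an element $x\in N$ of large word length and argue that geodesics ``repeatedly using $x$'' create new Busemann points in $\Gamma(G,U)$. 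Nothing forces this: an element of the kernel can be torsion, or distorted, so $(x^t)_t$ need not be a geodesic in $\Gamma(G,U)$ at all, and there is no Lipschitz function certifying that words in $x$ are efficient, precisely because $h(x)=0$ gives no lower bound on $|x^t|_S$. Moreover, your fallback of enlarging the generating set ``one element of $N$ at a time'' is not licensed by the hypothesis, which only guarantees finiteness of $\p_b\Gamma(G,S\cup\{x,x^{-1}\})$ for a \emph{single} added pair appended to the original $S$, not for iterated enlargements.

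The paper's resolution (Lemmas~\ref{lemma:map and geodesics} and~\ref{lemma:at least ker(h) busemann}) is to adjoin the opposite kind of element: an $x=y^{-1}\in K$ with $h(x)=|x|_S$, i.e.\ an element in the \emph{direction} of the geodesic rather than in the kernel. For such $x$ the rescaled function $\psi=h/|x|_S$ is $1$-Lipschitz for $d_U$ and certifies that $(gx^t)_t$ is a $U$-geodesic for every $g\in G$. One then shows that for $g\in\ker(h|_K)$ the limits $g.\gamma_\infty$ (with $\gamma_t=x^t$) are pairwise distinct: if $g.\gamma_\infty=\gamma_\infty$ then eventually $d_U(x^t,g)=t$, and comparing a $U$-geodesic from $g$ to $x^t$ against the $S$-length lower bound $t\cdot|x|_S\le d_S(x^t,g)$ forces every step of that geodesic to be $x^{\pm1}$, whence $g$ is a power of $x$ and $h(g)=0$ gives $g=1$. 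This yields an injection of $\ker(h|_K)$ into $\p_b\Gamma(G,U)$ using only the one permitted enlargement of $S$. In short: the kernel elements enter as \emph{translates} of a fixed $U$-geodesic, not as new generators, and that is the idea your proposal is missing.
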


The proof of Lemma \ref{lem:main lem} is at the end of the section, and will use the following lemmas.

\begin{lemma} \label{lem:geodesics}
Let $\Gamma(G,S)$ be a Cayley graph.
Let $(\gamma_t)_t$ be a geodesic.

Then, the pointwise limit 
$$ \gamma_\infty = \lim_{t \to \infty} b_{\gamma_t}  $$
exists, and $\gamma_\infty \in \p \Gamma(G,S)$. 

Then, there exists $t_0$ such that for all $t \geq t_0$ we have that $\gamma_\infty(\gamma_t) = - |\gamma_t|$.

If $\gamma_0 = 1$, then $\gamma_\infty(\gamma_t) = -t$ for all $t \in \N$.
\end{lemma}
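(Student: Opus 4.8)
The plan is to reduce everything to the elementary observation that, along any geodesic, the quantity $d(\gamma_t, y) - t$ is monotone non-increasing in $t$. First I would fix $y \in G$ and set $f_t(y) = d(\gamma_t, y) - t$. The triangle inequality $d(\gamma_{t+1},y) \le d(\gamma_{t+1},\gamma_t) + d(\gamma_t,y) = 1 + d(\gamma_t,y)$ gives $f_{t+1}(y) \le f_t(y)$, while $d(\gamma_t,y) \ge d(\gamma_t,\gamma_0) - d(\gamma_0,y) = t - d(\gamma_0,y)$ gives $f_t(y) \ge -d(\gamma_0,y)$. So $(f_t(y))_t$ is a non-increasing sequence of integers bounded below, hence eventually constant and in particular convergent. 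Since $b_{\gamma_t}(y) = d(\gamma_t,y) - d(\gamma_t,1) = f_t(y) - f_t(1)$, I conclude that the pointwise limit $\gamma_\infty(y) := \lim_{t\to\infty} b_{\gamma_t}(y)$ exists for every $y$; because $L(G,S)$ is closed under pointwise limits, this already shows $\gamma_\infty \in \overline{\Gamma(G,S)}$.

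Next, to place $\gamma_\infty$ in $\partial\Gamma(G,S)$, I would note that $|\gamma_t| = d(\gamma_t,1) \ge d(\gamma_t,\gamma_0) - d(\gamma_0,1) = t - |\gamma_0| \to \infty$, and then invoke the second assertion of Proposition \ref{prop:unbounded horofunctions}: since $b_{\gamma_t} \to \gamma_\infty$ with $|\gamma_t| \to \infty$, the function $\gamma_\infty$ is unbounded from below, so it cannot equal any $b_x$ (which is bounded below by $-|x|$); thus $\gamma_\infty \in \partial\Gamma(G,S)$.

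For the remaining assertions I would specialize the first paragraph to $y = 1$ and let $t_0$ be an index past which $f_t(1) = |\gamma_t| - t$ takes its constant limiting value $c$, so that $|\gamma_t| = t + c$ for all $t \ge t_0$. Then for any $t \ge t_0$ and any $s \ge t$ the geodesic property gives $d(\gamma_s,\gamma_t) = s - t$, hence $b_{\gamma_s}(\gamma_t) = (s-t) - |\gamma_s|$; letting $s \to \infty$ through values $\ge t_0$, this equals $(s-t) - (s+c) = -(t+c) = -|\gamma_t|$. So $\gamma_\infty(\gamma_t) = -|\gamma_t|$ for all $t \ge t_0$. Finally, when $\gamma_0 = 1$ we have $|\gamma_t| = d(\gamma_t,\gamma_0) = t$, so $f_t(1) \equiv 0$, one may take $t_0 = 0$ and $c = 0$, and the same computation gives $\gamma_\infty(\gamma_t) = -t$ for every $t \in \N$.

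I do not expect a genuine obstacle here; the only points needing a little care are that integrality is what upgrades ``convergent'' to ``eventually constant'' (which is exactly what makes $t_0$ finite in the second assertion), and that a general geodesic need not begin at the identity, so $|\gamma_t|$ differs from $t$ by the additive constant $c$ rather than being equal to it.
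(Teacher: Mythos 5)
Your proof is correct. It runs on the same engine as the paper's --- a non-increasing, integer-valued sequence bounded below, produced by the triangle inequality together with the geodesic property --- but you package the general-basepoint case differently, and the difference is worth noting. The paper first treats geodesics with $\gamma_0=1$, where $b_{\gamma_t}(y)$ itself is monotone, and then reduces an arbitrary geodesic $\beta$ with $\beta_0=x$ to that case by translating to $\gamma_t=x^{-1}\beta_t$ and using equivariance, $\beta_\infty=x.\gamma_\infty$; the $t_0$ of the second assertion is then extracted from the eventual stabilization of $d(\gamma_t,x^{-1})-|\gamma_t|$ by a short computation. You instead renormalize by the parameter rather than by the basepoint, setting $f_t(y)=d(\gamma_t,y)-t$, which is monotone for every geodesic no matter where it starts; writing $b_{\gamma_t}=f_t-f_t(1)$ gives convergence in one stroke, and $t_0$ is read off directly from the eventual constancy of $f_t(1)=|\gamma_t|-t$. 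The two stabilizing quantities coincide up to relabelling (for the paper's $\beta$ one has $d(\gamma_t,x^{-1})-|\gamma_t|=|\beta_t|-t$), so the content is the same, but your version avoids the translation step and makes the additive constant $c$ explicit. You are also more careful than the printed proof on one point: you justify $\gamma_\infty\in\p\Gamma(G,S)$ explicitly, via $|\gamma_t|\to\infty$ and the second assertion of Proposition \ref{prop:unbounded horofunctions} (the limit is unbounded below, while each $b_x$ is bounded below by $-|x|$), a membership claim the paper leaves implicit. What your route forgoes is the identity $\beta_\infty=x.\gamma_\infty$, which the paper's reduction yields as a by-product and reuses later (e.g.\ in Lemma \ref{lemma:x for finite index}); but that identity is not part of the statement being proved, and follows anyway from Proposition \ref{prop:action}.
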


\begin{proof}
Let $(\gamma_t)_t$ be a geodesic with $\gamma_0=1$.
Set $b_t = b_{\gamma_t}$.  For any $x \in G$, by the triangle inequality 
$b_x(y) = d(y,x) - |x| \geq - |y|$.
Thus, for fixed $y \in G$ the sequence $(b_t(y))_t$ is bounded below by $-|y|$.
Because $\gamma$ is a geodesic with $\gamma_0=1$, 
we have that $|\gamma_{t+1}| - |\gamma_t| = d(\gamma_{t+1},\gamma_t)$.
So another use of the triangle inequality gives,
\begin{align*}
b_{t+1} (y) - b_t(y) & = d(\gamma_{t+1}, y) - d(\gamma_t,y) - d(\gamma_{t+1},\gamma_t) \leq 0 .
\end{align*}
We conclude that $(b_t(y))_t$ is a non-increasing sequence of integers, bounded from below, and hence 
must converge to some integer which we denote by $\gamma_\infty(y)$.

Also, for every $s \geq t$ we have that
$$ b_s(\gamma_t)  = d(\gamma_t,\gamma_s) - |\gamma_s| = - |\gamma_t| . $$
Taking $s \to \infty$ we have that $\gamma_\infty(\gamma_t) = -|\gamma_t| = - t$ for all $t$.

Now, let $\beta$ be a geodesic, with $\beta_0 = x$.
Then, $(\gamma_t = x^{-1} \beta_t)_t$ is a geodesic with $\gamma_0=1$,
so the limit $\beta_\infty = x.\gamma_\infty$ exists.

Let $t_0$ be such that for all $t \geq t_0$ we have that 
$\gamma_\infty(x^{-1}) = d(\gamma_t, x^{-1}) - |\gamma_t|$.
Then, for all $ s \geq t \geq t_0$ we have that 
$$ d(\gamma_s,\gamma_t) - (d(\gamma_s,x^{-1}) - d(\gamma_t,x^{-1})) = 
d(\gamma_t,x^{-1}) - |\gamma_t| - ( d(\gamma_s,x^{-1}) - |\gamma_s|) = 0 , $$
so that for all $t \geq t_0$ and all large enough $s$,
\begin{align*}
\beta_\infty(\beta_t) & = d(\beta_s, \beta_t) - |\beta_s| \\
& = d(\gamma_s,\gamma_t) - (d(\gamma_s,x^{-1}) - d(\gamma_t,x^{-1})) - |\beta_t| 
= - |\beta_t| .
\end{align*}
\end{proof}

\begin{lemma}
\label{lemma:x for finite index}
Let $\Gamma(G,S)$ be a Cayley graph.
Let $K \leq G$ be a finite index subgroup $[G:K]<\infty$. 

Then, there exists $h \in \partial_b \Gamma(G,S)$ and $1 \neq y \in K$ such that $h(y)=-|y|$.
\end{lemma}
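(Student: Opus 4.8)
The plan is to produce a Busemann point whose associated geodesic, restricted to a subsequence, stays inside a single coset of $K$, and then extract from that subsequence an element $y \in K$ along which the horofunction attains $-|y|$. Concretely, start with any bi-infinite geodesic $(\gamma_t)_{t \in \Z}$ with $\gamma_0 = 1$, whose existence is classical (K\"onig's lemma, as noted in the introduction). Consider the forward ray $(\gamma_t)_{t \geq 0}$ and look at the coset $\gamma_t K$ as $t$ ranges over $\N$. Since $[G:K] < \infty$, by pigeonhole there is a coset $gK$ such that $\gamma_t \in gK$ for infinitely many $t$; replacing $\gamma$ by the geodesic $g^{-1}.\gamma$ (which by Lemma \ref{lem:geodesics} and Proposition \ref{prop:action} is again a geodesic with $(g^{-1}.\gamma)_\infty = g^{-1}.\gamma_\infty$), and noting that $g^{-1}\gamma_t \in K$ for the same infinitely many $t$, we may assume $\gamma_0 = 1$ and that there is an infinite set $T \subseteq \N$ with $\gamma_t \in K$ for all $t \in T$.

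Now set $h = \gamma_\infty \in \partial_b \Gamma(G,S)$, which exists and is a Busemann point by Lemma \ref{lem:geodesics}. That same lemma gives $h(\gamma_t) = -|\gamma_t| = -t$ for every $t \in \N$, and in particular for every $t \in T$. Pick any $t \in T$ with $t \geq 1$ (possible since $T$ is infinite) and put $y = \gamma_t$. Then $y \in K$, $y \neq 1$ because $|y| = t \geq 1$, and $h(y) = -|y|$, which is exactly the assertion.

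The only genuine subtlety — and the step I would be most careful about — is the translation step: making sure that after replacing $\gamma$ by $g^{-1}.\gamma$ the new sequence is still a geodesic starting at $1$ and that its limiting horofunction is still a bona fide Busemann point (rather than losing track of the basepoint normalization). This is handled by the left-invariance of $d_S$ together with the equivariance in Proposition \ref{prop:action}(2)--(3): $(g^{-1}.\gamma_t)_t$ satisfies \eqref{eqn:geodesic dfn} since $d(g^{-1}\gamma_k, g^{-1}\gamma_m) = d(\gamma_k,\gamma_m) = k-m$, its zeroth term is $g^{-1}\gamma_0 = g^{-1}$ which we then re-normalize by left-multiplying by $(g^{-1})^{-1}$... more cleanly, one simply observes that $(\gamma_t \gamma_{t_*}^{-1})$-type reindexing is unnecessary: it suffices to apply Lemma \ref{lem:geodesics} directly to the geodesic $\beta$ with $\beta_0 = g$ given by $\beta_t = \gamma_t$ shifted, so that $\beta_\infty(\beta_t) = -|\beta_t|$ for all large $t$, and then choose $\beta_t \in K$. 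Once this bookkeeping is pinned down, the rest is immediate from the pigeonhole argument and Lemma \ref{lem:geodesics}.
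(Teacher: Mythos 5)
Your argument is, in its final form, the same as the paper's: pigeonhole on the finitely many cosets of $K$ to find one, say $gK$, that the geodesic visits infinitely often, translate by $g^{-1}$, and apply Lemma \ref{lem:geodesics} to the translated geodesic. However, the normalization step in your first paragraph is false as stated: after replacing $\gamma_t$ by $g^{-1}\gamma_t$ the new geodesic starts at $g^{-1}$, not at $1$, so you cannot simultaneously ``assume $\gamma_0=1$'' and keep $\gamma_t\in K$ for $t\in T$ (re-normalizing by left-multiplying by $g$ just undoes the translation, as your own aside reveals). Consequently the identity $h(\gamma_t)=-|\gamma_t|=-t$ for \emph{every} $t$, which your second paragraph relies on, is not available --- that clause of Lemma \ref{lem:geodesics} is only stated for geodesics based at the identity. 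The repair you sketch in your last paragraph is the correct one and is exactly what the paper does: keep the translated geodesic $\beta_t=g^{-1}\gamma_t$ with basepoint $g^{-1}$, invoke the clause of Lemma \ref{lem:geodesics} giving $\beta_\infty(\beta_t)=-|\beta_t|$ only for all $t\geq t_0$, and choose $t\geq t_0$ with $\beta_t\in K$. One small point you should still add there: nontriviality of $y=\beta_t$ no longer follows from ``$|y|=t$''; instead note $|\beta_t|\geq |\gamma_t|-|g|=t-|g|>0$ for $t$ large enough.
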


\begin{proof}
Let $R \subset G$ be finite set of representatives for the cosets of $K$, \ie $G=\uplus_{r \in R} rK$, 
and suppose that $1 \in R$. 
Let $(\gamma_t)_t$ be some geodesic in $\Gamma(G,S)$ such that $\gamma_0=1$ and let 
$\gamma_{\infty} \in \partial_b \Gamma(G,S)$. 
By Lemma \ref{lem:geodesics} we have that $\gamma_{\infty}(\gamma_t)=-t$ for every $t$. 
Since $R$ is finite, there exists $r \in R$ such that $\gamma_t \in rK$ for infinitely many $t$. 
Denote $\beta_t=r^{-1}\gamma_t$. 
Note that $\beta_t$ is a geodesic and $\beta_{\infty}=r^{-1}.\gamma_{\infty}$. 
By Lemma \ref{lem:geodesics}
there exists $t_0$ such that for every $t \geq t_0$ we have 
$\beta_{\infty}(\beta_t)=-|\beta_t|$. 
Since by definition $\beta_t \in K$ for infinitely many $t$, there exists $t$ such that $\beta_t \in K$ and 
$\beta_{\infty}(\beta_t)=-|\beta_t|$. 
We are done by taking $h=\beta_{\infty}$ and $y=\beta_t$.
\end{proof}

\begin{lemma}
\label{lemma:map and geodesics}
Let $\Gamma(G,S)$ be a Cayley graph.
Let $K \leq G$ be a subgroup. 

Suppose there exists a $1$-Lipschitz map $h:G \to \R$ 
such that $h|_K$ is a group homomorphism, 
and suppose also that there is $1 \neq x \in K$ such that $h(x)=|x|_S$.

Then $|x^t|_S=t \cdot |x|_S$ for every $t \in \mathbb{N}$. 
If we denote $U=S \cup \{x,x^{-1}\}$ then for any $g \in G$ we have that 
$(gx^t)_t$ is a geodesic in $\Gamma(G,U)$.

Moreover, there exists a geodesic $\gamma$ in $\Gamma(G,S)$ such that 
$\gamma_{t|x|} = x^t$
for all $t \in \N$.
\end{lemma}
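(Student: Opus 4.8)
The plan is to prove the three assertions in the order in which they appear, each one feeding into the next, and I expect the only non-routine point to be the lower bound on word length in $\Gamma(G,U)$.

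For the first assertion I would use only the homomorphism and Lipschitz hypotheses. Since $1 \in K$, the homomorphism $h|_K$ satisfies $h(1)=0$ and $h(x^t)=t\,h(x)=t\,|x|_S$ for every $t \in \N$. As $h$ is $1$-Lipschitz with respect to $d_S$, we get $t\,|x|_S = |h(x^t)-h(1)| \le d_S(x^t,1) = |x^t|_S$, while the triangle inequality gives the reverse bound $|x^t|_S \le t\,|x|_S$; hence $|x^t|_S = t\,|x|_S$. Note $\ell := |x|_S \ge 1$ since $x \neq 1$.

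For the second assertion, the key idea I would emphasize is a rescaling trick. With $U = S \cup \{x,x^{-1}\}$, set $\psi := \tfrac1\ell h : G \to \R$. For any $g \in G$ and any generator $u \in U$ one has $d_S(gu,g) \le \ell$ (it equals $1$ when $u \in S$ and equals $\ell$ when $u = x^{\pm 1}$), so $|\psi(gu)-\psi(g)| = \tfrac1\ell|h(gu)-h(g)| \le \tfrac1\ell d_S(gu,g) \le 1$; thus $\psi$ is $1$-Lipschitz with respect to $d_U$. Using $h(x^n)=n\ell$ from the first step, $\psi(x^n)-\psi(1) = \tfrac1\ell h(x^n) = n$, and therefore $|x^n|_U = d_U(x^n,1) \ge |\psi(x^n)-\psi(1)| = n$. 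The reverse inequality is immediate since $x^n$ is a product of $n$ letters of $U$, so $|x^n|_U = n$. Left-invariance of $d_U$ then gives $d_U(gx^k,gx^m)=d_U(x^{k-m},1)=|x^{k-m}|_U=k-m$ for all $0 \le m \le k$, i.e.\ $(gx^t)_t$ is a geodesic in $\Gamma(G,U)$.

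For the last assertion I would concatenate finite geodesic segments: for each $t \in \N$ choose a finite geodesic in $\Gamma(G,S)$ from $x^t$ to $x^{t+1}$, which by left-invariance has length $d_S(x^t,x^{t+1})=\ell$, and splice these into one ray $\gamma=(\gamma_k)_k$ with $\gamma_{t\ell}=x^t$ (using $|x^t|_S = t\ell$). To verify $\gamma$ is a geodesic, fix $i \le j$ and write $i = b\ell+r$, $j = a\ell+s$ with $0 \le r,s < \ell$: if $a=b$ then $\gamma_i,\gamma_j$ lie on one finite geodesic segment, so $d_S(\gamma_i,\gamma_j)=s-r=j-i$; if $b<a$, following $\gamma$ gives $d_S(\gamma_i,\gamma_j)\le j-i$, while applying the triangle inequality to $x^b,\gamma_i,\gamma_j,x^{a+1}$ together with $d_S(x^b,\gamma_i)=r$, $d_S(\gamma_j,x^{a+1})=\ell-s$ and $d_S(x^b,x^{a+1})=(a+1-b)\ell$ yields $d_S(\gamma_i,\gamma_j)\ge (a-b)\ell+s-r=j-i$. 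Hence equality throughout, and $\gamma$ is the desired geodesic. The one step that is not a pure triangle-inequality/left-invariance manipulation is the bound $|x^n|_U \ge n$; the device that makes it work is that renormalizing $h$ by $1/|x|_S$ produces a function that is $1$-Lipschitz for the enlarged generating set $U$ yet still increases at unit speed along the powers of $x$, so $x$ genuinely has length one in $\Gamma(G,U)$ and its powers cannot be shortcut.
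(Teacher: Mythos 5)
Your proof is correct, and its first and third parts follow essentially the same route as the paper: the same two-sided bound for $|x^t|_S = t\,|x|_S$, and the same splice-and-triangle-inequality construction of the geodesic $\gamma$ through the powers of $x$ (the paper uses a single finite geodesic from $1$ to $x$ translated by $x^n$, you allow a fresh segment for each $t$; this makes no difference). The one place you genuinely diverge is the proof that $|x^t|_U = t$. The paper argues combinatorially: writing $x^t = u_1\cdots u_m$ with $u_j \in U$ and $J = \{j : u_j \in S\}$, it bounds $t|x|_S = |x^t|_S \le |J| + (m-|J|)|x|_S \le m|x|_S$ to force $m \ge t$. You instead rescale, observing that $\psi = h/|x|_S$ is $1$-Lipschitz for $d_U$ (it varies by at most $1$ across every $U$-edge) while $\psi(x^t) = t$, which gives $|x^t|_U \ge t$ directly. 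Both are valid; your version is arguably cleaner and has the added virtue that this exact function $\psi$ is the device the paper itself introduces in the very next lemma (Lemma \ref{lemma:at least ker(h) busemann}), so your argument unifies the two. The paper's counting argument, on the other hand, is the template for the harder step in that later lemma (showing $g = 1$ by counting $S$-letters along a $U$-geodesic), so seeing it here is not wasted either.
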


\begin{proof}
By the triangle inequality $|x^t|_S \leq t |x|_S$.
By our assumptions on $h$, since $x \in K$ we have that
$$ t |x|_S = t h(x) = h(x^t) \leq |x^t|_S , $$
so $|x^t|_S = t |x|_S = h(x^t)$ for all $t \in \N$.

Next we show that $|x^t|_U=t$ for every $t \in \N$. 
Write $x^t=u_1 \cdots  u_m$ where $u_j \in U$ for all $1 \leq j \leq m = |x^t|_U$. 
Let $J=\{j \ | \ u_j \in S\}$, so that if $j \notin J$ then $u_j \in \{x,x^{-1}\}$. 
Since $x \in U$, $m \leq t$. Also,
$$ t \cdot |x|_S=|x^t|_S = | u_1  \cdots u_m|_S 
\leq \sum_{j=1}^m |u_j|_S \leq |J|+(m-|J|) \cdot |x|_S \leq m \cdot |x|_S , $$
where we have used that $|x|_S \geq 1$ because $x \neq 1$. Thus, $t \leq m$, implying that $|x^t|_U = m = t$.

Now, let $g \in G$. Since $(x^t)_t$ is a geodesic in $\Gamma(G,U)$ and the graph metric is left invariant, $d_U(gx^n,gx^m)=d_U(x^n,x^m)=|n-m|$, so $(gx^t)_t$ is also a geodesic in $\Gamma(G,U)$.

Finally, for the last assertion,
we want to construct a geodesic $\gamma$
in $\Gamma(G,S)$ such that $\gamma_{t|x|} = x^t$
for all $t \in \N$.
To this end, set $m=|x|$ and let 
$(1=x_0, x_1, \ldots, x_m=x)$ be a finite geodesic in $\Gamma(G,S)$.
Define $\gamma_0=1$ and for $0 \leq j \leq m$ define $\gamma_{nm+j} = x^n x_j$.

We will show that $\gamma$ is a geodesic in $\Gamma(G,S)$.
Indeed, since $\gamma$ is a path in the graph $\Gamma(G,S)$, it is immediate
that $d(\gamma_{t} , \gamma_{s}) \leq s-t$ for all $s \geq t$,
so we only need to prove a matching lower bound.

Let $k \geq n$ and $0 \leq i , j \leq m$ be such that $km+i \geq nm+j$.
Then,
\begin{align*}
d(\gamma_{nm+j} , \gamma_{km+i}) & = d(x^n x_j , x^k x_i) = d(x_j , x^{k-n} x_i) .
\end{align*}
If $k=n$ then $i \geq j$ and 
$$ d(\gamma_{nm+j} , \gamma_{km+i}) = d(x_j,x_i) = i-j = km+i - (nm+j) $$
because $(x_0, \ldots, x_m)$ is a finite geodesic.
If $k >n$, then
\begin{align*}
(k-n+1)|x|_S & = |x^{k-n+1}|_S \leq d(x^{k-n+1},  x^{k-n} x_i ) + d(x^{k-n} x_i , x_j)
+ d(x_j,1) \\
& = d(x_m , x_i) + d(\gamma_{nm+j} , \gamma_{km+i}) + d(x_j,x_0) \\
& = d(\gamma_{nm+j} , \gamma_{km+i}) + |x|_S + j-i ,
\end{align*}
where we have used again that $(x_0,\ldots,x_m)$ is a finite geodesic.
Thus, in all cases we get that
$$ d(\gamma_{nm+j} , \gamma_{km+i}) \geq km+i - (nm+j) , $$
providing a matching lower bound to the upper bound, and proving that 
$\gamma$ is indeed a geodesic in $\Gamma(G,S)$.
\end{proof}

\begin{lemma}
\label{lemma:at least ker(h) busemann}
Let $\Gamma(G,S)$ be a Cayley graph.
Let $K \leq G$ be a subgroup. 

Suppose there exists a $1$-Lipschtiz map $h:G \to \R$ 
such that $h|_K$ is a group homomorphism, 
and suppose also that there is $1 \neq x \in K$ such that $h(x)=|x|_S$.
Let $U=S \cup \{x,x^{-1}\}$. 

Then, 
$$ |\ker{(h|_K)}| \leq |\partial_b \Gamma(G,U)| . $$
\end{lemma}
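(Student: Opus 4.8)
The idea is to produce an injection from $\ker(h|_K)$ into $\partial_b\Gamma(G,U)$ by using distinct elements of the kernel to "translate" a fixed Busemann point and obtain distinct Busemann points. First I would invoke Lemma \ref{lemma:map and geodesics}: the hypotheses on $h$ and $x$ give $|x^t|_S = t|x|_S$ for all $t$, that $(gx^t)_t$ is a geodesic in $\Gamma(G,U)$ for every $g\in G$, and there is a geodesic $\gamma$ in $\Gamma(G,S)$ with $\gamma_{t|x|}=x^t$. In $\Gamma(G,U)$ the element $x$ has length $1$, so $(x^t)_t$ is a $U$-geodesic with $\gamma_0=1$, and by Lemma \ref{lem:geodesics} it converges to a Busemann point $\xi\in\partial_b\Gamma(G,U)$ with $\xi(x^t)=-t$ for all $t\in\N$.

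Now for each $g\in\ker(h|_K)$ consider the translated geodesic $(gx^t)_t$ in $\Gamma(G,U)$ and its limit $\eta_g = g.\xi \in \partial_b\Gamma(G,U)$. The plan is to show $g\mapsto \eta_g$ is injective on $\ker(h|_K)$. To distinguish $\eta_g$ from $\eta_{g'}$ I would evaluate these horofunctions at a suitable point. Since $(gx^t)_t$ is a $U$-geodesic, Lemma \ref{lem:geodesics} gives $\eta_g(gx^t) = \eta_g(g) - |$ something; more precisely, because $\eta_g = g.\xi$ and $\xi(x^t) = -t$, equivariance of the action gives $\eta_g(gx^t) = \xi(x^t) + (\text{correction})$. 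The cleanest route: use that $\eta_g(gy) = \xi(y) + \eta_g(g)$ is false in general, so instead track $\eta_g(z) - \eta_g(gx^t)$ for $z$ fixed. Actually the natural invariant to extract is $\eta_g(g) $ versus $\eta_g(g')$ — but since horofunctions vanish at $1$, I would instead compare $\eta_g(w)$ for $w$ ranging over coset representatives, or simpler: show that $\eta_g$ determines the coset $g\ker$ uniquely by looking at where $\eta_g$ achieves value $-t$ along the "axis" $g\langle x\rangle$. The key computation should be that $\eta_g(gx^t) - \eta_g(g'x^t)$ stabilizes to $h(g') - h(g) = 0$ when $g,g'$ are in the kernel but the geodesics $g\langle x\rangle$ and $g'\langle x\rangle$ are genuinely different cosets of $\langle x\rangle$, forcing $\eta_g \neq \eta_{g'}$ because they "point in different directions" — one shows $\eta_g(g x^t) = -t + \eta_g(g)$ while $\eta_{g'}(gx^t) \geq -t + \eta_{g'}(g) + c$ for some $c>0$ coming from $d_U(g'x^s, gx^t)$ being strictly larger than $|t-s|$ when $g^{-1}g' \notin \langle x\rangle$.

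I expect the main obstacle to be exactly this last point: proving that translating $\xi$ by two distinct kernel elements $g,g'$ yields distinct Busemann points in $\Gamma(G,U)$. The danger is that two different cosets $g\langle x\rangle$, $g'\langle x\rangle$ could still converge to the same horofunction (as the excerpt itself warns, equivalent geodesics need not intersect). The resolution must use the hypothesis $h(g)=h(g')=0$ together with $h(x)=|x|_S>0$: if $\eta_g=\eta_{g'}$ then evaluating this common horofunction along both geodesics and using $1$-Lipschitzness of $h$ relative to $d_U$ (note $h$ is $1$-Lipschitz for $d_S$ hence also for $d_U$ since $d_U\le d_S$, but one must check $h(x)=1$ in the relevant normalization — actually $h$ is $1$-Lipschitz for $d_S$, and $|x|_S = h(x)$, so along the $U$-geodesic $h$ decreases by exactly $1$ per step only on the $x$-moves) should force $g^{-1}g'$ to have $h$-value a nonzero multiple of... leading to a contradiction with $g^{-1}g' \in \ker(h|_K)$. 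I would set up the bookkeeping carefully: write $\eta_g(y) = \lim_t (d_U(gx^t,y) - t)$, expand $d_U(gx^t,y)$ via the geodesic structure, and show that if $g\ne g'$ in $\ker(h|_K)$ then there is a vertex $y$ (e.g. $y=g$ or $y=g'$) at which $\eta_g$ and $\eta_{g'}$ disagree, using that $\eta_g(g) = 0$ but $\eta_{g'}(g) = \lim_t(d_U(g'x^t, g) - t) > 0$ whenever $g \notin g'\langle x\rangle$ — which follows because every $U$-geodesic from $g'$ through the $\xi$-direction leaves $g$ behind by a definite amount, quantified using that $h$ separates: $h(g'x^t) - h(g) = t\cdot 1 - 0 = t$ (interpreting $h$ as having $h(x)=1$ after rescaling, or keeping $|x|_S$ and arguing with the $S$-geodesic $\gamma$ from Lemma \ref{lemma:map and geodesics}) forces $d_U(g'x^t,g)\ge t$ with equality only if $g$ lies on the axis.
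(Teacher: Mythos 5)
Your plan is essentially the paper's proof: fix the Busemann point $\xi=\gamma_\infty$ of the $U$-geodesic $(x^t)_t$, send each $g\in\ker(h|_K)$ to $g.\xi$ (the limit of $(gx^t)_t$), and prove injectivity by showing that $g.\xi=\xi$ forces $d_U(x^t,g)=t$ for all large $t$, after which a step-counting argument along a $U$-geodesic from $g$ to $x^t$ (comparing $t\,|x|_S\le d_S(x^t,g)\le |J|+(t-|J|)\,|x|_S$, where $J$ indexes the $S$-steps) shows every step is $x^{\pm1}$, so $g\in\langle x\rangle$ and then $h(g)=0$ gives $g=1$. The one statement you should not let stand is ``$h$ is $1$-Lipschitz for $d_S$ hence also for $d_U$ since $d_U\le d_S$'': the inequality goes the wrong way ($|h(x)-h(1)|=|x|_S$ while $d_U(x,1)=1$), so $h$ is generally \emph{not} $1$-Lipschitz for $d_U$. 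The fix is exactly the rescaling you mention only in passing: $\psi=h/|x|_S$ \emph{is} $1$-Lipschitz for $d_U$ (each $S$-step changes it by at most $1/|x|_S$ and each $x^{\pm1}$-step by exactly $1$) and satisfies $\psi(gx^t)=t$ for $g\in\ker(h|_K)$, which yields the lower bound $d_U(gx^t,g')\ge t$ that drives the whole argument, and whose case of equality is resolved by the step-count above.
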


\begin{proof}

Let $g \in \ker(h|_K)$.
Let $\gamma_t = x^t$ and $\beta_t = g x^t$.  Note that both $(\gamma_t)_t$ and $(\beta_t)_t$ 
are geodesics in $\Gamma(G,U)$ by Lemma \ref{lemma:map and geodesics}.

Define $\psi(x)=\frac{1}{|x|_S} \cdot h(x)$. Notice that $\psi$ is a $1$-Lipschitz map on $\Gamma(G,U)$, 
since $|\psi(s)| \leq 1$ for every $s \in S$ and $|\psi(x)|=|\psi(x^{-1})|=1$. 
Note also that $\psi|_K$ is a homomorphism to $\R$, and satisfies $\psi(\gamma_t)=\psi(\beta_t)=t$ for every $t$. 
Thus for every $t$,
$$d_U(\gamma_t,\gamma_0)=t=|\psi(\gamma_t)-\psi(\beta_0)| \leq d_U(\gamma_t,\beta_0)$$
$$d_U(\beta_t,\beta_0)=t=|\psi(\beta_t)-\psi(\gamma_0)| \leq d_U(\beta_t,\gamma_0)$$
Consider $\overline{\Gamma(G,U)}$. Note that,
\begin{align}
\label{eqn:gamma and beta}
b_{\gamma_t}(\gamma_0)-b_{\gamma_t}(\beta_0) & =d_U(\gamma_t,\gamma_0)-d_U(\gamma_t,\beta_0) \leq 0 
\nonumber \\
& \leq d_U(\beta_t,\gamma_0)-d_U(\beta_t,\beta_0)=b_{\beta_t}(\gamma_0)-b_{\beta_t}(\beta_0) .
\end{align}

If $\gamma_{\infty}=\beta_{\infty}$, there exists $t_0$ such that for every $t \geq t_0$ we have that $b_{\gamma_t}(\gamma_0)-b_{\gamma_t}(\beta_0)=b_{\beta_t}(\gamma_0)-b_{\beta_t}(\beta_0)$, implying equality throughout
\eqref{eqn:gamma and beta}.
Thus, $d_U(\gamma_t,\beta_0)=d(\gamma_t,\gamma_0)=t$ for every $t \geq t_0$.

Now we shall prove that $g=1$. Since $h(g)=0$, $h(x^n)=n \cdot |x|_S$ and $h$ is $1$-Lipschitz, we have 
$$ t \cdot |x|_S=h(\gamma_t)=|h(\gamma_t)-h(g)| \leq d_S(\gamma_t,g) , $$ 
so that $t \cdot |x|_S \leq d_S(\gamma_t,g)$ for every $t$. 

Fix some $t \geq t_0$. 
Since $d_U(\gamma_t,\beta_0)=t$ there exist a finite geodesic 
$(z_j)_{j=0}^t$ in $\Gamma(G,U)$ such that $z_0=\beta_0=g$ 
and $z_t=\gamma_t=x^t$. 
Let $u_j=z_{j-1}^{-1}z_j$ for every $1 \leq j \leq t$, and let $J=\{j ~|~ u_j \in S\}$. We get:
$$t \cdot |x|_S \leq d_S(\gamma_t,g) \leq \sum_{j=1}^t |u_j|_S=|J|+(t-|J|) \cdot |x|_S=t \cdot |x|_S-|J| \cdot (|x|_S-1) . $$
Because $x \neq 1$, it follows that $|J|=0$, 
so $u_j \in \{x,x^{-1}\}$ for every $1 \leq j \leq t$. 
Thus $x^t=z_t=gx^m$ for some $m \in \Z$. 
But that means that $g=x^{t-m}$ and so $0=h(g)=h(x^{t-m})=(t-m) \cdot |x|_S$. 
This implies that $t-m=0$, \ie that $g=1$.

Thus we have shown that the only $g \in \ker(h|_K)$ such that $\gamma_\infty = g.\gamma_\infty$ is $g=1$.
So if $g,g' \in \ker(h|_K)$ are such that $g.\gamma_\infty = g'.\gamma_\infty$ then $g=g'$.
That is, the map $g \mapsto g. \gamma_\infty$ from $\ker(h|_K)$ to $\p_b \Gamma(G,U)$ is injective, completing the proof.
\end{proof}

We now complete this section by proving Lemma \ref{lem:main lem}.

\begin{proof}[Proof of Lemma \ref{lem:main lem}]
Let $S$ be a finite symmetric generating set for an infinite group $G$ such that 
for any $x \in G$, setting $U = S \cup \{ x ,x^{-1}\}$ gives a Cayley graph $\Gamma(G,U)$ with a 
finite Busemann boundary.  
Specifically, taking $x \in S$ tells us that $|\p_b \Gamma(G,S)| < \infty$.
Let 
$$ K = \{ x \in G \ : \ \forall \ h \in \p_b \Gamma(G,S) \ , \ x.h=h \} $$
be the kernel of the action.  Since $\p_b \Gamma(G,S)$ is finite, $[G:K] < \infty$.
By Lemma \ref{lemma:x for finite index} there exists $h \in \p_b \Gamma(G,S)$ and $1 \neq y \in K$ such that 
$h(y) = -|y|_S$.  By Proposition \ref{prop:stabilizer}, $h|_K$ is a homomorphism into $\R$, 
so $h(x) = |x|_S$ for $x = y^{-1}$.
By Lemma \ref{lemma:at least ker(h) busemann}, we have that $| \ker(h|_K) | \leq | \p_b \Gamma(G,U) |$,
where $U = S \cup \{x,x^{-1} \}$.  This is finite by our assumptions on $S$.

Since $h$ is a non-trivial integer valued homomorphism on the infinite group $K$, 
it must be that $h(K) \cong \Z$. 
Thus, $K / \ker(h|_K) \cong \Z$, and as the kernel is finite, this implies that $K$ is virtually $\Z$.
As $[G:K] < \infty$, this completes the proof.
\end{proof}

\section{Graphs of Linear Growth}

The ``only if'' direction of Theorem \ref{thm:main} was originally proven in \cite{TY16}
(in that paper horoboundary refers to what we call Busemann boundary).
Sam Shepperd gave the following short and elegant proof.

By a graph of linear growth we mean that the number of vertices in the ball of radius 
$r$ in the graph grows at most linearly in the radius $r$.

\begin{proposition}
\label{prop:linear growth implies finite Busemann}
Let $\Gamma$ be a graph, with $d$ denoteing the graph distance. 
Let $o \in \Gamma$ be some vertex, and consider the Busemann boundary $\p_b \Gamma$
with respect to this base point.  
Let $S_r = \{ x \in \Gamma \ : \ d(x,o) = r \}$ be the sphere of radius $r$ around $o$.

Then, 
$$ | \p_b \Gamma | \leq \liminf_{r \to \infty} |S_r| . $$

Specifically, if $\Gamma$ has linear growth, then the Busemann boundary is finite.
\end{proposition}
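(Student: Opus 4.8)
The plan is to produce, for each Busemann point $\xi \in \p_b\Gamma$, a "signature" living in a set of size $|S_r|$ for infinitely many $r$, in a way that distinct Busemann points get distinct signatures. The natural candidate: each Busemann point $\xi$ is the limit $\gamma_\infty$ of some infinite geodesic $\gamma = (\gamma_t)_t$ with $\gamma_0 = o$, and for such a geodesic $\gamma_t \in S_t$ for every $t$ (since $d(\gamma_t,o)=t$). So I would assign to $\xi$, for each $r$, the vertex $\gamma_r \in S_r$. The first thing to pin down is that this is well-defined enough to be useful: different geodesics representing the same $\xi$ may pass through different vertices of $S_r$, so I cannot literally get a map $\p_b\Gamma \to S_r$. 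Instead I will argue by contradiction on finiteness.

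Here is the order of steps. First, suppose $|\p_b\Gamma| > \liminf_r |S_r|$; pick finitely many distinct Busemann points $\xi_1,\dots,\xi_N$ with $N > \liminf_r |S_r|$, and choose representing geodesics $\gamma^{(1)},\dots,\gamma^{(N)}$ (each starting at $o$) with $(\gamma^{(i)})_\infty = \xi_i$. Second, use the hypothesis to find a sequence $r_k \to \infty$ with $|S_{r_k}| < N$ for all $k$. Third, for each $k$, the $N$ vertices $\gamma^{(1)}_{r_k},\dots,\gamma^{(N)}_{r_k}$ all lie in $S_{r_k}$, so by pigeonhole two of them coincide: there are indices $i \ne j$ (depending on $k$) with $\gamma^{(i)}_{r_k} = \gamma^{(j)}_{r_k}$. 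Since there are only finitely many pairs $(i,j)$, some pair occurs for infinitely many $k$; fix that pair $(i,j)$ and the corresponding infinite subsequence. Fourth — the crux — I claim that $\gamma^{(i)}_{r_k} = \gamma^{(j)}_{r_k}$ for infinitely many $k$ forces $\xi_i = \xi_j$, contradicting distinctness. This is essentially the remark quoted in the excerpt: if two geodesics share infinitely many vertices then they have the same limit. Concretely, write $x_k = \gamma^{(i)}_{r_k} = \gamma^{(j)}_{r_k}$; for any fixed $y$ and any $r_k$ large, $b_{x_k}(y) = d(x_k,y) - r_k$ is computed the same way whether we think of $x_k$ as sitting on $\gamma^{(i)}$ or on $\gamma^{(j)}$, and both $b_{\gamma^{(i)}_{t}}(y)$ and $b_{\gamma^{(j)}_{t}}(y)$ are non-increasing in $t$ (the monotonicity established in the proof of Lemma \ref{lem:geodesics}) and converge to $\xi_i(y)$ and $\xi_j(y)$ respectively; since the subsequences $b_{\gamma^{(i)}_{r_k}}(y)$ and $b_{\gamma^{(j)}_{r_k}}(y)$ agree for infinitely many $k$ and each converges to the respective limit along the full sequence, we get $\xi_i(y) = \xi_j(y)$. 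As $y$ was arbitrary, $\xi_i = \xi_j$.

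I expect the main obstacle to be exactly this last step — making rigorous the passage from "the geodesics coincide at infinitely many radii" to "equal Busemann points," because a Busemann point need not be a limit of the Busemann functions along every subsequence unless one exploits the monotonicity. The clean way is to note that for a geodesic starting at $o$, $t \mapsto b_{\gamma_t}(y)$ is a non-increasing integer sequence bounded below (Lemma \ref{lem:geodesics}), hence the limit along any subsequence $r_k \to \infty$ equals the limit along the whole sequence, namely $\gamma_\infty(y)$. Once that observation is in place the pigeonhole argument closes immediately. Finally, if $\Gamma$ has linear growth, then $|S_r| \le |B_r| \le Cr$ does not directly bound $\liminf_r |S_r|$ — but linear growth of balls does force $\liminf_r |S_r| < \infty$: if $|S_r| \to \infty$, then $|B_r| = \sum_{\rho \le r}|S_\rho|$ would grow faster than linearly. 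So $\liminf_r |S_r|$ is finite, and the Proposition gives a finite Busemann boundary.
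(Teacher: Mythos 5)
Your argument is, in substance, the paper's own proof run in contrapositive: both reduce to the fact that two geodesics sharing infinitely many vertices have the same limit (equivalently, geodesics with distinct limits eventually trace out disjoint subsets of the spheres), and your pigeonhole on the spheres $S_{r_k}$ along a sequence realizing the $\liminf$ is the same counting, just organized as a proof by contradiction. The closing steps are fine; note only that once Lemma \ref{lem:geodesics} gives convergence of the full sequence $t \mapsto b_{\gamma_t}(y)$, every subsequence converges to the same value, so the renewed appeal to monotonicity in your ``crux'' step is not actually needed. Your derivation of $\liminf_r |S_r| < \infty$ from linear growth is the same as the paper's.

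The one genuine gap is the opening normalization: you assume every Busemann point is the limit of a geodesic with $\gamma_0 = o$, which is exactly what makes $\gamma^{(i)}_{r_k} \in S_{r_k}$ and hence the pigeonhole work. The paper defines a Busemann point as the limit of an arbitrary infinite geodesic, based anywhere, and its proof deliberately avoids your assumption by working with the sets $S_r \cap \gamma$ for arbitrary geodesics (these are eventually nonempty, and eventually pairwise disjoint for inequivalent geodesics). Your assumption is in fact true, but it requires an argument: for any geodesic $\gamma$, the function $t \mapsto d(o,\gamma_t) - t$ is non-increasing (since $d(o,\gamma_{t+1}) \le d(o,\gamma_t) + 1$) and bounded below by $-d(o,\gamma_0)$, hence eventually constant, say from $t_0$ on; then $d(o,\gamma_t) = d(o,\gamma_{t_0}) + (t-t_0)$ for $t \ge t_0$, so concatenating a finite geodesic from $o$ to $\gamma_{t_0}$ with the tail $(\gamma_t)_{t \ge t_0}$ yields a geodesic based at $o$ with the same limit (the limit depends only on the tail). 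Either supply this, or replace ``$\gamma^{(i)}_{r_k}$'' by ``some vertex of $\gamma^{(i)}$ lying in $S_{r_k}$'' --- which exists for all large $r_k$ because $d(o,\gamma_t) \to \infty$ and changes by at most one per step --- which is precisely the paper's device and removes the basepoint issue entirely.
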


\begin{proof}
For a geodesic $\gamma$ in $\Gamma$, denote  
$$ S_r \cap \gamma = \{ x \in S_r \ : \ \exists \ t \ , \ \gamma_t = x \} . $$
For any geodesic $\gamma$ there exists some $r(\gamma)$ such that 
for all $r \geq r(\gamma)$ we have $|S_r \cap \gamma| = 1$.

Now,
let $\gamma^{(1)} , \ldots, \gamma^{(n)}$ be $n$ geodesics, 
such that $\gamma^{(1)}_\infty , \ldots, \gamma^{(n)}_\infty$ are all distinct Busemann points.
Since these geodesics are all pairwise non-equivalent, no two of them can intersect infinitely many times.  
Thus, there exists some $r_0$ such that for all $r \geq r_0$, 
we have
$$ \forall \ i \neq j \ , \  \big( S_r \cap \gamma^{(i)} \big) \cap \big( S_r \cap \gamma^{(j)} \big) = \emptyset . $$
By making sure that $r_0 \geq \max\{ r(\gamma^{(1)}) , \ldots, r(\gamma^{(n)}) \}$ we get that
$$ n = \big| \bigcup_{j=1}^n (S_r \cap \gamma^{(j)})  \big| \leq |S_r|  $$
for all $r \geq r_0$.
Taking $\liminf$ on the right-hand side completes the proof of the first assertion.

Now, if $\Gamma$ has linear growth, then there exists a constant $C>0$
such that for any $r \in \N$ we have 
$$ \big| \{ x \in \Gamma \ : \ d(x,o) \leq r \} \big| =  
\sum_{k=0}^r |S_k| \leq C (r+1) . $$
This is easily seen to imply that $\liminf_{r \to \infty} |S_r| < \infty$, 
implying the second assertion.
\end{proof}

Note also that one can replace the geodesics in the proof above 
with infinite simple paths which have distinct limits in the horoboundary.
It is not true, however, that every horofunction is the limit of a sequence forming a simple infinite path.

As mentioned, the converse statement to Proposition \ref{prop:linear growth implies finite Busemann}
for general graphs, \ie an analogue of Theorem \ref{thm:main}, is not true in general.
Consider the following example with a graph of arbitrary growth, but only one point in 
the horofunction boundary.

\begin{example}
\label{exm:general graph exp growth}
Consider the following graph:
Let $(\Gamma_n)_{n \in \N}$ be a sequence of finite graphs, 
and fix some vertex $x_n \in \Gamma_n$ in each one.

The vertex set of our graph $\Gamma$ is then defined to be $\N \cup \bigcup_n \Gamma_n$.
Edges in $\Gamma$ are given by: 
\begin{itemize}
\item the original edges in $\N$ ($\{x,y\}$ is an edge if $|x-y|=1$
for $x,y \in \N$),  
\item the original edges in each $\Gamma_n$, 
\item and additional edges $\{n,x_n\}$ for each $n \in \N$.
\end{itemize}
See Figure \ref{fig:grove}.

It is not difficult to compute that if $x \in \Gamma_n \cup \{n\}$
and if $y \in \Gamma_m \cup \{m\}$ for $m>n$, then
$$ d(y,x) = d(y,m) + d(m,n) + d(n,x) , $$
so that the only possible horofunction in this graph is given by
$$ h(x) = \lim_{m \to \infty} (d(m,x) - m) . $$
\end{example}

\begin{figure}
\includegraphics[width=0.8\textwidth]{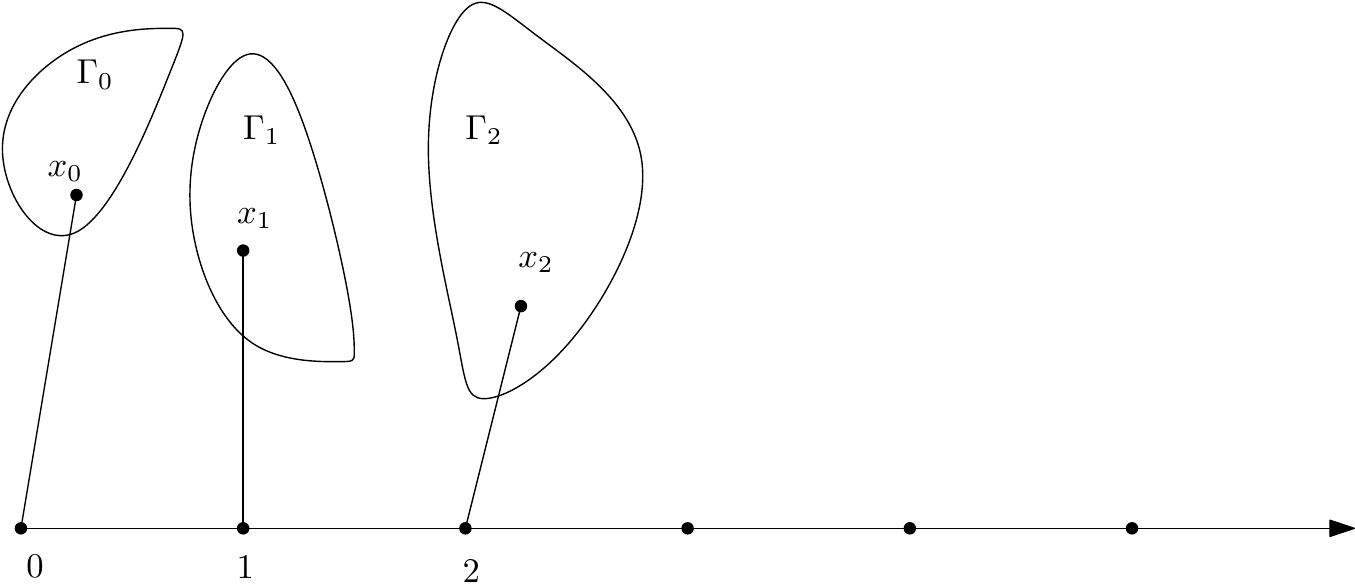}
\caption{The graph $\Gamma$ from Example \ref{exm:general graph exp growth}.}
\label{fig:grove}
\end{figure}


\begin{theorem}
\label{thm:horofunction are Busemann}
Let $G$ be a group that is virtually $\Z$,
and let $\Gamma(G,S)$ be any Cayley graph of $G$.

Then, any horofunction in $\p \Gamma(G,S)$
is a Busemann point; \ie $\p \Gamma(G,S) = \p_b \Gamma(G,S)$.
\end{theorem}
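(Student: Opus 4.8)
The plan is to reduce everything to the structure of a virtually $\Z$ group. Since $G$ is virtually $\Z$, it contains a finite-index normal subgroup $N \cong \Z$ (one can always pass to a normal finite-index cyclic subgroup: take the core of the given finite-index $\Z$, which is still infinite hence still $\cong \Z$). Write $x$ for a generator of $N$. Because $N$ has finite index, there is a constant $D$ such that every $g \in G$ is within distance $D$ (in $d_S$) of some power $x^k$; more precisely $|g^{-1} x^{k(g)}|_S \leq D$ for a suitable $k(g)\in\Z$, and $|x^k|_S \to \infty$ as $|k|\to\infty$. So geometrically $\Gamma(G,S)$ is a bounded ``thickening'' of the line $\{x^k : k \in \Z\}$. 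The key quantitative input I would establish first is that the map $k \mapsto |x^k|_S$ is, up to additive bounded error, linear: there is $\ell > 0$ with $\bigl| \,|x^k|_S - \ell |k|\, \bigr| \le C$ for all $k$. This follows from subadditivity of $k \mapsto |x^k|_S$ together with the fact that $x$ has infinite order and $N$ is finite-index (Fekete, plus a two-sided comparison using that cosets of $N$ are ``evenly spread'' along the line); alternatively one can invoke that every finitely generated virtually abelian group is quasi-isometric to $\Z$ with an explicit bi-Lipschitz-on-the-line structure.

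Next, let $h \in \p\Gamma(G,S)$ be an arbitrary horofunction; say $h = \lim_n b_{g_n}$ with $|g_n|_S \to \infty$ (Proposition on unbounded horofunctions). Using the thickening, for each $n$ pick $k_n \in \Z$ with $|g_n^{-1} x^{k_n}|_S \le D$. Passing to a subsequence we may assume $k_n \to +\infty$ or $k_n \to -\infty$; WLOG $k_n \to +\infty$. The core claim is then: $h = \lim_n b_{x^{k_n}}$, i.e. replacing $g_n$ by the nearby point $x^{k_n}$ on the line does not change the limiting horofunction. This is where the main work lies. The point is that $b_{g_n}(y) - b_{x^{k_n}}(y) = \bigl(d(g_n,y) - d(x^{k_n},y)\bigr) - \bigl(|g_n| - |x^{k_n}|\bigr)$, and each of the two bracketed differences is bounded in absolute value by $2D$ uniformly in $n$ and $y$; so $b_{g_n} - b_{x^{k_n}}$ is a uniformly bounded sequence of functions. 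Boundedness alone is not enough — I need it to tend to $0$ pointwise. For this I would argue that along the line, $d(x^{k_n}, y) - d(g_n, y)$ actually stabilizes: intuitively, for $k_n$ very large compared to $|y|$ and $|g_n|$, all geodesics from $g_n$ (resp. $x^{k_n}$) to $y$ must ``come down the line'' past a common window, so the difference $d(x^{k_n},y)-d(g_n,y)$ becomes eventually constant equal to some fixed value; combined with the analogous statement for $y=1$, the horofunction difference vanishes. Making ``come down the line'' precise is the crux: I would use that for $k$ large, any geodesic from $x^k$ to a fixed vertex $z$ passes through the set $\{x^j : j \in [m, m']\}$-thickening for some window depending only on $z$, which follows from the finite-index thickening plus the near-linearity of $|x^k|_S$ (a vertex far out on the line can only be escaped by travelling essentially all the way back along the line, since the line is the unique ``thick direction'').

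Once this reduction is in hand, finishing is easy: $(x^{k})_{k \ge 0}$, after reindexing by arc-length along a finite geodesic realizing consecutive powers (Lemma \ref{lemma:map and geodesics} with $h$ replaced by an appropriate $1$-Lipschitz linear function on the line, or directly since $|x^k|_S$ is within bounded distance of linear one shows $(x^{k})$ lies on a genuine geodesic after a bounded-time reparametrization), is — or contains — an infinite geodesic ray $\gamma$, and by Lemma \ref{lem:geodesics} $b_{x^{k_n}} \to \gamma_\infty$ along the subsequence, which is a Busemann point. Hence $h = \gamma_\infty \in \p_b\Gamma(G,S)$. Since $h$ was arbitrary, $\p\Gamma(G,S) \subseteq \p_b\Gamma(G,S)$, and the reverse inclusion is by definition, giving equality. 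I expect the main obstacle to be the ``geodesics come down the line'' step — controlling geodesics in $\Gamma(G,S)$ well enough to show the horofunction difference genuinely vanishes rather than merely staying bounded; the near-linearity estimate for $|x^k|_S$ and a careful case analysis on where a geodesic from a far power $x^k$ can go should be the technical engine here, and one should be mildly careful that different subsequences could a priori give different $k_n\to\pm\infty$, but each choice lands on one of the (finitely many, by the already-proved finiteness of the boundary, or directly two) ends of the line.
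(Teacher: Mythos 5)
Your strategy---project each $g_n$ to a nearest point $x^{k_n}$ on the line and reduce to Busemann points of the line---founders on exactly the step you flag as the crux, and that step is in fact false: replacing $g_n$ by a point at bounded distance can change the limiting horofunction. Concretely, take $G=\Z\times(\Z/2\Z)$ with $S=\{(\pm1,0),(0,1)\}$, so that $d\bigl((a,\eps),(b,\delta)\bigr)=|a-b|+\mathbf{1}[\eps\neq\delta]$. Let $x=(1,0)$, $g_n=(n,1)$, $k_n=n$, so $d(g_n,x^{k_n})=1$ for all $n$. Then $b_{g_n}\to h$ with $h(a,\eps)=-a+\mathbf{1}[\eps=0]-1$, while $b_{x^{k_n}}\to h'$ with $h'(a,\eps)=-a+\mathbf{1}[\eps=1]$; these are distinct horofunctions. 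So no control on how geodesics ``come down the line'' can make $b_{g_n}-b_{x^{k_n}}\to 0$, and indeed your argument would show that $\p\Gamma(G,S)$ consists of at most the two limits along the two ends of the line, which is already wrong in this example (there are four horofunctions). Note that $h$ is nevertheless a Busemann point---but because $(n,1)_n$ is itself a geodesic, namely a \emph{translate} of the line, not because it is close to the line. This is precisely how the paper's proof goes: with $N=\langle x\rangle$ of finite index and a finite transversal $R$, infinitely many $y_n$ lie in a single coset $rN$, i.e.\ are \emph{exactly} of the form $rx^{k}$; these are points of the translated geodesic $r\alpha$ (or $r\beta$), so the subsequential limit is $r.\alpha_\infty$ (or $r.\beta_\infty$), which is a Busemann point because $\p_b\Gamma(G,S)$ is invariant under the group action (Proposition \ref{prop:action}).

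There is a second, smaller gap: taking $x$ to be an arbitrary generator of a finite-index copy of $\Z$ only yields $\bigl|\,|x^k|_S-\ell|k|\,\bigr|\leq C$, and such near-linearity does not place the points $x^k$ on an actual geodesic ray, which is what you need in order to invoke Lemma \ref{lem:geodesics}. The paper manufactures the right $x$ first: Lemma \ref{lemma:x for finite index} produces a Busemann point $\vphi$ fixed by a finite-index subgroup $K$ together with $1\neq y\in K$ satisfying $\vphi(y)=-|y|_S$; since $\vphi|_K$ is a homomorphism (Proposition \ref{prop:stabilizer}), the element $x=y^{-1}$ satisfies $|x^t|_S=t\,|x|_S$ exactly, and Lemma \ref{lemma:map and geodesics} then threads a genuine geodesic of $\Gamma(G,S)$ through the powers of $x$. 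To salvage your outline, replace ``nearest point on the line'' by ``exact coset decomposition $y_n=rx^{k}$'' and replace your arbitrary generator by the element supplied by Lemma \ref{lemma:x for finite index}.
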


\begin{proof}
Since $G$ is virtually $\Z$ it has linear growth, so by Proposition \ref{prop:linear growth implies finite Busemann}
we know that the Busemann boundary $\p_b \Gamma(G,S)$ is finite.
$G$ acts on $\p_b \Gamma(G,S)$.  Let 
$$ K  = \{ x \in G \ : \ \forall \ h \in \p_b \Gamma(G,S) \ , \ x.h=h \} $$
be the kernel of the action.
Since $\p_b \Gamma(G,S)$ is finite, $[G:K] < \infty$.
By Lemma \ref{lemma:x for finite index} there exists $\vphi \in \p_b \Gamma(G,S)$ and $1 \neq y \in K$ such that 
$\vphi(y) = -|y|_S$.  
By Proposition \ref{prop:stabilizer}, $\vphi|_K$ is a homomorphism into $\R$, 
so $\vphi(x) = |x|_S$ for $x = y^{-1}$.
Since $\vphi$ is $1$-Lipschitz, by Lemma \ref{lemma:map and geodesics}
there exists a geodesic $\alpha$ in $\Gamma(G,S)$ such that
$\alpha_{n|x|} = x^n$ for all $n \in \N$.
Applying the same logic to $y=x^{-1}$ but with the $1$-Lipschitz function $-\vphi$,
we can also define a geodesic $\beta$ 
so that $\beta_{n|x|} = x^{-n}$ for all $n \in \N$.

Now, consider the subgroup $N = \IP{ x } \cong \Z$.  Since $G$ is virtually $\Z$, it is impossible that 
$[G:N] = \infty$. 
Let $R$ be a set of representatives for the cosets of $N$; 
\ie $G = \biguplus_{r \in R} r N$ with $|R| = [G:N]<\infty$. Assume that $1 \in R$.

Let $h \in \p \Gamma(G,S)$ be an arbitrary horofunction.
Let $(y_n)_n$ be a sequence in $G$ such that $h = \lim_{n \to \infty} b_{y_n}$.
By Proposition \ref{prop:unbounded horofunctions}, the set $\{ y_n \}_n$ is infinite, and thus, there must exist
some $r \in R$ such that infinitely many $y_n$ are in the coset $r N$.
Thus, one of the two sets
$$ \{ y_n \ : \ \exists \ k \ , \ y_n = r\alpha_k \} \qquad \textrm{ or } 
\{ y_n \ : \ \exists \ k \ , \ y_n = r\beta_k \} $$
is infinite.
That is, the sequence $(y_n)_n$ has infinitely many common points with one of the geodesics
$r \alpha$ or $r \beta$.
This implies that 
$$ h = \lim_{n\to \infty} b_{y_n} = \lim_{k \to \infty} b_{r x^k} \in \{ r.\alpha_\infty , r.\beta_\infty \} . $$
As this was for an arbitrary horofunction $h \in \p \Gamma(G,S)$, we have that
$$ \p \Gamma(G,S) \subset \{ r.\alpha_\infty , r.\beta_\infty \ : \ r \in R \} \subset \p_b \Gamma(G,S) . $$
\end{proof}

\end{document}